\newcommand{\J}{\mathcal{J}}
\newcommand{\ben}{\begin{equation}}
\newcommand{\een}{\end{equation}}
\newcommand{\benn}{\begin{equation*}}
\newcommand{\eenn}{\end{equation*}}
\newcommand{\R}{\mathds{R}}
\newtheorem{theorem}{Theorem}
\newtheorem{lemma}{Lemma}
\newtheorem{corollary}{Corollary}
\newtheorem{remark}{Remark}
\begin{document}

\title{A monotonicity-based globalization of the level-set method
for inclusion detection}

\author[1]{ B. Harrach  \thanks{ harrach@math.uni-frankfurt.de} }
\author[2]{H. Meftahi \thanks{houcine.meftahi@enit.utm.tn}}
\affil[1]{Department of Mathematics, Goethe University Frankfurt, Germany.}
\affil[2]{University of Jendouba/ISIKEF and  ENIT of Tunis, Tunisia.}
\providecommand{\keywords}[1]{\textbf{\textbf{Keywords:}} #1}

\date{}

\maketitle

\begin{abstract}
We focus on a geometrical inverse problem that involves recovering discontinuities in electrical conductivity based on boundary measurements. 
This problem serves as a model to introduce a shape recovery technique that merges the monotonicity method with the level-set method. 
The level-set method, commonly used in shape optimization, often relies heavily on the accuracy of the initial guess.
To overcome this challenge, we utilize the monotonicity method to generate a more precise initial guess, which is then used to initialize the level-set method. 
We provide numerical results to illustrate the effectiveness of this combined approach.\\
\keywords{Global convergence, Level-set method, Monotonicity method, Inverse problems, Linearization, shape derivative.} 
\end{abstract}

\section{Introduction}
The reconstruction of anomalies  in materials through non-destructive testing is becoming increasingly crucial and has generated significant interest in the field of inverse problems. This field encompasses a wide range of applications covering engineering, geosciences and medical diagnostics.

In engineering, particularly in the aerospace and automotive industries, it is essential to accurately detect and characterize defects or inclusions in materials to ensure the integrity and reliability of structures. Non-destructive testing techniques play an essential role in this respect, as they enable inspections to be carried out without compromising the integrity of the materials tested.

Numerous researchers have addressed the task of reconstructing anomalies using diverse methodologies, including iterative  algorithms based on
topological and shape gradient methods; see for instance
 \cite{belhachmi2023level,chaabane2013topological,belhachmi2013shape,meftahi2009etudes, belhachmi2018topology,laurain2016shape}
and the references therin.

The level-set method is a powerful tool for shape optimization that represents shapes implicitly as the zero level-set of a higher-dimensional function, 
typically a signed distance function. The shape is evolved over time by solving partial differential equations (PDEs) that govern the movement of the level-set.
 These methods allow for complex topological changes like merging and splitting, making them particularly useful for problems where the optimal shape may 
have unknown or complicated boundaries. The level-set method provides a flexible framework to handle large deformations and can naturally incorporate constraints 
such as volume preservation. For an overview of the level-set method the reader is referred to    \cite{belhachmi2023level,laurain2016shape} and the refernces therin.

In the context of geometrical inverse problems,  numerical methods  based on the level-set method and on the shape differentiation
are highly dependent on the initial guess. This is because these problems are often ill-posed, meaning they can have multiple solutions or no solution at all under 
certain conditions. The presence of numerous local minima, each with potentially different topologies, further complicates the problem. As a result, the choice of the initial
 guess plays a critical role in determining the success of the algorithm, as it can significantly affect whether the method converges to a desirable solution or gets trapped in 
suboptimal ones. Therefore, selecting an appropriate initial guess,  can be crucial in overcoming these challenges.

In \cite{chaabane2013topological}, the authors demonstrate that, for certain shapes, the issue of selecting an initial guess can be circumvented by employing the topological 
gradient method. In this work, we propose an alternative approach that overcomes this problem by utilizing the monotonicity property of the Neumann-to-Dirichlet operator.

Our main contribution is numerical. We develop a numerical reconstruction method that combines the monotonicity method to obtain an initial approximation of the desired solution, 
followed by refinement using the level set method. We have also shown that our numerical method can successfully recover both the shape and the parameter solutions of the inverse problem.

Let us stress that the problem of local convergence, generally applies to inverse coefficient problems in PDEs. Known reconstruction methods often rely on iterative, e.g., Newton-type approaches or on globally minimizing a non-convex regularized data-fitting functional. Such approaches only locally converge and they can be observed to heavily depend on the initial guess. In this work we benefit from having the monotonicity method as a globally convergent method for the selection of the initial guess.
More generally, the construction of globally convergent algorithms has been studied using quasi-reversibility and convexification ideas in the seminal work of
Klibanov et al., cf., e.g., \cite{beilina2008globally,beilina2012approximate,klibanov2017convexification,klibanov2019convexification}, and we also mention the recent works of one of the authors on global Newton convergence and convex semidefinite reformulations for EIT and a related Robin problem \cite{harrach2021uniqueness,harrach2021solving,harrach2023calderon}.

Let us give a brief overview about the the monotonicity method:\\
The monotonicity method was first introduced in \cite{tamburrino2002new} for addressing the electrical resistance tomography problem.
 The authors proposed a non-iterative approach based on the monotonicity properties of the resistance matrix.
 Since then, this method has been further developed for applications in static Electrical Capacitance Tomography,
 Inductance Tomography, and Electrical Resistance Tomography \cite{calvano2012fast, garde2022simplified, garde2022reconstruction}. 
It has also been extended to parabolic problems, including Magnetic Induction Tomography (MIT) \cite{Tamburrino}. 
Additionally, the monotonicity method has been applied to linear elasticity \cite{eberle2023resolution, eberle2021shape}, 
where both the standard and linearized versions were analyzed. Monotonicity properties have also been leveraged for shape reconstruction in electrical impedance tomography \cite{harrach2013monotonicity}.

The paper is organized as follows: Section 2 introduces the direct and inverse problems. Section 3 discusses the monotonicity property of the
 Neumann-to-Dirichlet operator and its application to shape reconstruction, including numerical results. Section 4 reformulates the inverse problem 
as a minimization problem using the Kohn-Vogelius functional, provides the shape derivative, and presents numerical results using the level-set method. 
Section 5 combines the monotonicity method with the level-set method for reconstruction and includes additional numerical results.

\section{A  model  problem  formulation}
We consider a bounded domain $\Omega\subset \R^2$ with smooth boundary  $\partial\Omega$ and outer unit normal vector $\nu$.  Assume that the conductivity in 
$\Omega$  is  $\sigma = \sigma_0 + (\sigma_1-\sigma_0)\chi_D$,  where  $D \in \mathcal{O}_{ad}$,  with 
\[
\mathcal{O}_{ad} :=\{D \text{  open of class } C^{1,1}, \text{ and } \overline D \subset \Omega \text{ has connected complement}\} ,
\]
and $\sigma_0, \sigma_1$ are positive constants and $\chi$ denotes the indicator function. 

Denote $L^2_{\Diamond}(\partial\Omega)$ (resp. $H^1_{\Diamond}(\Omega)$ ) the space of  $L^2$
(resp. $H^1$)-functions with  vanishing  integral  mean on $\partial\Omega$.
For a given current density $g\in L^2_{\Diamond}(\partial\Omega)$ the potential $u\in H^1_{\Diamond}(\Omega)$  satisfies the following Neumann problem
\begin{equation}
\label{EIT}
\nabla\cdot (\sigma\nabla u) = 0\text{ in } \Omega,  \quad \sigma\partial_\nu u|_{\partial\Omega} =g,\quad \int_{\partial\Omega} u\,ds=0.
\end{equation}
The weak formulation for problem \eqref{EIT} reads 
\begin{equation}
\label{EIT_f}
\int_\Omega \sigma \nabla u\cdot \nabla  w\,dx = \int_{\partial\Omega} gw\,ds, \text{ for all } w\in  H^1_{\Diamond}(\Omega).
\end{equation}
It is well known that \eqref{EIT_f} has a unique solution $u\in H^1_{\Diamond}(\Omega)$. Then the Neumann-to-Dirichlet  operator 
$\Lambda(\sigma):  L^2_{\Diamond}(\partial\Omega)\rightarrow L^2_{\Diamond}(\partial\Omega), \quad  g \mapsto u\vert_{\partial\Omega}$
is well defined. It is easily shown  that  $\Lambda(\sigma)\in \mathcal L(L^2_{\Diamond}(\partial\Omega))$ is self-adjoint  and compact.

The quadratic form associated with  $\Lambda(\sigma)$ is defined by 
\[
\langle \Lambda(\sigma)g,g \rangle= \int_\Omega \sigma \vert \nabla u\vert^2\,dx.
\]
It is also know that $\Lambda$ is Fr\' ech\' et differentiable and its    Fr\' ech\' et derivative  $\Lambda^\prime(\sigma)\in \mathcal{L}(L_\Diamond^2(\partial\Omega))$ fulfills 
\begin{equation}\label{Frechet_var_form}
\langle (\Lambda^\prime(\sigma)\kappa) g,g \rangle= -\int_\Omega \kappa \vert \nabla u\vert^2\,dx, \quad \text{ for all }\kappa \in  L^\infty(\Omega).
\end{equation}

We assume that  the values  $\sigma_0$ and $\sigma_1$  are known {\it a priori} but the
information about the geometry $D$ is missing. Then, the geometrical  inverse problem we consider here is the following:
\begin{equation}
\label{invp}
\begin{aligned}
\text{ \it  Find   the shape  $D$ }  
 \text{ \it knowing the Neuman-to-Dirichlet  map } \Lambda(\sigma). 
\end{aligned} 
 \end{equation}
 This is equivalent to  find the support  $\text{supp}(\sigma_1-\sigma_0)$ from  the Neumann-to-Dirichlet  operator $\Lambda(\sigma)$.
We will also consider the problem, where $\sigma_0$ and the sign of the contrast $\sigma_1>\sigma_0$ is known, but the value of $\sigma_1$ is also to be reconstructed
from $\Lambda(\sigma)$.

 \section{Shape recovery via the monotonicity method}
This section briefly summarizes the monotonocity method for reconstructing embedded shapes in conducting bodies. We start with the concept of Loewner monotonicity and then derive the standard (non-linearized) monotonicity method, and the faster linearized monotonicity method as in \cite{harrach2013monotonicity}. We then present the noise-robust monotonicity-based regularization method from \cite{harrach2016enhancing}.

\subsection{Loewner monotonicity}

To motivate the monotonicity-based method let us recall the following lemma.
\begin{lemma}\label{lemma:mon_inequ}
Let $\sigma, \tau\in L^\infty_{+}(\Omega)$ be two conductivities  and let 
$g\in L^2_{\diamond}(\partial\Omega)$ be an applied boundary current. 
Let $u_1= u^g_{\sigma}, u_2=u^g_{\tau}\in H^1_{\diamond}(\Omega)$. Then
\begin{equation}\label{mon_inequ_1}
\int_{\Omega}(\sigma-\tau)\vert\nabla u_\tau\vert^2\,dx\geq
\langle \left(\Lambda(\tau)-\Lambda(\sigma)\right)g,g\rangle \geq
\int_{\Omega}\frac{\tau}{\sigma}(\sigma-\tau)\vert\nabla u_\tau\vert^2\,dx,
\end{equation}
and
\begin{equation}\label{mon_inequ_2}
\int_{\Omega}(\sigma-\tau)\vert\nabla u_\sigma\vert^2\,dx\leq
\langle \left(\Lambda(\tau)-\Lambda(\sigma)\right)g,g\rangle \leq
\int_{\Omega}\frac{\sigma}{\tau}(\sigma-\tau)\vert\nabla u_\sigma\vert^2\,dx.
\end{equation}
\end{lemma}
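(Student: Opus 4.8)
The plan is to reduce everything to a single energy identity and then exploit the symmetry $\sigma\leftrightarrow\tau$ for the second pair of inequalities. Throughout I abbreviate $u_\sigma=u^g_\sigma$, $u_\tau=u^g_\tau$ and write $\langle g,w\rangle=\int_{\partial\Omega}gw\,ds$. Recalling the quadratic form $\langle\Lambda(\sigma)g,g\rangle=\int_\Omega\sigma|\nabla u_\sigma|^2\,dx$ and testing the weak formulation \eqref{EIT_f} for $\sigma$ with $w=u_\tau$ and for $\tau$ with $w=u_\tau$, I obtain the cross relation $\int_\Omega\sigma\nabla u_\sigma\cdot\nabla u_\tau\,dx=\langle g,u_\tau\rangle=\int_\Omega\tau|\nabla u_\tau|^2\,dx$. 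Substituting this into $\langle(\Lambda(\tau)-\Lambda(\sigma))g,g\rangle=\int_\Omega\tau|\nabla u_\tau|^2\,dx-\int_\Omega\sigma|\nabla u_\sigma|^2\,dx$ and completing the square leads to the key identity
\begin{equation}\label{plan:key}
\langle(\Lambda(\tau)-\Lambda(\sigma))g,g\rangle=\int_\Omega(\sigma-\tau)|\nabla u_\tau|^2\,dx-\int_\Omega\sigma|\nabla(u_\sigma-u_\tau)|^2\,dx.
\end{equation}

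The left inequality in \eqref{mon_inequ_1} is then immediate, since the subtracted term in \eqref{plan:key} is nonnegative. For the right inequality I must bound that same term from above. Testing \eqref{EIT_f} for $\sigma$ and for $\tau$ with $w=u_\sigma-u_\tau$ and subtracting gives the orthogonality relation $\int_\Omega\sigma|\nabla(u_\sigma-u_\tau)|^2\,dx=\int_\Omega(\tau-\sigma)\nabla u_\tau\cdot\nabla(u_\sigma-u_\tau)\,dx$. Writing the right-hand side as $\int_\Omega\frac{\tau-\sigma}{\sqrt{\sigma}}\nabla u_\tau\cdot\sqrt{\sigma}\,\nabla(u_\sigma-u_\tau)\,dx$ and applying the Cauchy--Schwarz inequality, I can cancel one factor of $\bigl(\int_\Omega\sigma|\nabla(u_\sigma-u_\tau)|^2\,dx\bigr)^{1/2}$ to conclude $\int_\Omega\sigma|\nabla(u_\sigma-u_\tau)|^2\,dx\leq\int_\Omega\frac{(\sigma-\tau)^2}{\sigma}|\nabla u_\tau|^2\,dx$.

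Inserting this bound into \eqref{plan:key} and simplifying the pointwise coefficient $(\sigma-\tau)-\frac{(\sigma-\tau)^2}{\sigma}=\frac{\tau}{\sigma}(\sigma-\tau)$ yields the right inequality in \eqref{mon_inequ_1}. Finally, \eqref{mon_inequ_2} follows without further work by interchanging the roles of $\sigma$ and $\tau$ in \eqref{mon_inequ_1} and multiplying the resulting chain by $-1$, which reverses the inequalities while turning $u_\tau$ into $u_\sigma$ and $\frac{\tau}{\sigma}$ into $\frac{\sigma}{\tau}$. The main obstacle is really just the bookkeeping in deriving \eqref{plan:key} and choosing the weight $\sqrt{\sigma}$ so that Cauchy--Schwarz produces exactly the factor $\frac{(\sigma-\tau)^2}{\sigma}$ needed for the telescoping simplification; everything else is routine.
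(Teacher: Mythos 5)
Your proof is correct, and the steps check out: the cross relation $\int_\Omega\sigma\nabla u_\sigma\cdot\nabla u_\tau\,dx=\int_\Omega\tau\vert\nabla u_\tau\vert^2\,dx$, the key identity, the orthogonality relation $\int_\Omega\sigma\vert\nabla(u_\sigma-u_\tau)\vert^2\,dx=\int_\Omega(\tau-\sigma)\nabla u_\tau\cdot\nabla(u_\sigma-u_\tau)\,dx$, the weighted Cauchy--Schwarz bound, the pointwise simplification $(\sigma-\tau)-\tfrac{(\sigma-\tau)^2}{\sigma}=\tfrac{\tau}{\sigma}(\sigma-\tau)$, and the symmetry argument for \eqref{mon_inequ_2} are all valid (the only pedantic point is that cancelling the factor $\bigl(\int_\Omega\sigma\vert\nabla(u_\sigma-u_\tau)\vert^2\,dx\bigr)^{1/2}$ requires noting the trivial case where it vanishes). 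Regarding the comparison: the paper does not carry out the argument itself; it cites Lemma~3.1 of the monotonicity reference and obtains \eqref{mon_inequ_2} by interchanging $\sigma$ and $\tau$, exactly as in your final step. In that cited proof the left inequality of \eqref{mon_inequ_1} comes from the same energy identity you derive, but the right inequality is usually obtained from a second exact completion of squares,
\[
\langle(\Lambda(\tau)-\Lambda(\sigma))g,g\rangle-\int_\Omega\frac{\tau}{\sigma}(\sigma-\tau)\vert\nabla u_\tau\vert^2\,dx
=\int_\Omega\frac{1}{\sigma}\,\vert\sigma\nabla u_\sigma-\tau\nabla u_\tau\vert^2\,dx\;\geq\;0,
\]
rather than from your orthogonality-plus-Cauchy--Schwarz estimate of the error term. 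The two routes are equivalent in strength and both rely only on the variational formulation \eqref{EIT_f}; the exact identity is slightly shorter and shows when the lower bound is attained with equality (namely $\sigma\nabla u_\sigma=\tau\nabla u_\tau$), while your argument has the advantage of reusing a single key identity for both bounds and making explicit why the weight $\sqrt{\sigma}$ is the natural choice.
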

\begin{proof}
This lemma goes back to the works \cite{kang1997inverse,ikehata1998size}. A short proof of \eqref{mon_inequ_1} using the variational formulation \eqref{EIT_f} can be found in \cite[Lemma~3.1]{harrach2013monotonicity}. The other inequalities \eqref{mon_inequ_2} then follow from interchanging $\tau$ and $\sigma$.
\end{proof}

Lemma~\ref{lemma:mon_inequ} implies monotonicity of the mapping $\sigma \mapsto \Lambda(\sigma)$ with respect to the following partial orderings. For symmetric operators $A,B\in \mathcal L(L^2_{\Diamond}(\partial\Omega))$ we introduce the semidefinite (aka Loewner) ordering:
\[
A\preceq B \quad \text{ denotes that } \quad \int_{\partial \Omega} g (B-A) g ds \geq 0 \quad \text{ for all } g\in L^2_{\Diamond}(\partial\Omega).
\]
Also, for functions $\sigma,\tau\in L^\infty(\Omega)$
\[
\sigma\preceq \tau \quad \text{ denotes that } \quad \tau(x)\geq \sigma(x) \quad \text{ for all } x\in \Omega\ \text{(a.e.)}.
\]

With this notation, we obtain the following result as an immediate consequence of Lemma~\ref{lemma:mon_inequ}.
\begin{corollary}\label{cor:monotony_simple}
Let $\sigma_1, \sigma_2\in L^\infty_{+}(\Omega)$. Then
\[
\sigma_1\leq \sigma_2\quad \text{ implies }\quad \Lambda(\sigma_1)\succeq \Lambda(\sigma_2).
\]
\end{corollary}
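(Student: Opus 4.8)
The plan is to read this off directly from Lemma~\ref{lemma:mon_inequ}, since the corollary only asserts one-sided (sign) information whereas the lemma already supplies two-sided quantitative bounds. The single point requiring care is bookkeeping of which conductivity plays which role, because the map $\sigma\mapsto\Lambda(\sigma)$ is \emph{order-reversing}: a larger conductivity yields a smaller Neumann-to-Dirichlet operator in the Loewner sense, consistent with the physical picture that raising the conductivity lowers the boundary potentials produced by a fixed current. So I must expect the hypothesis $\sigma_1\le\sigma_2$ to translate into $\Lambda(\sigma_2)\preceq\Lambda(\sigma_1)$, not the other way around.

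Concretely, I would apply Lemma~\ref{lemma:mon_inequ} with its dummy conductivities specialized to $\tau:=\sigma_1$ and $\sigma:=\sigma_2$, and then retain only the lower bound. Using the left-hand inequality in \eqref{mon_inequ_2} this gives, for every $g\in L^2_\Diamond(\partial\Omega)$,
\[
\int_\Omega (\sigma_2-\sigma_1)\,\lvert\nabla u_{\sigma_2}\rvert^2\,dx \;\le\; \langle(\Lambda(\sigma_1)-\Lambda(\sigma_2))g,g\rangle .
\]
Under the hypothesis $\sigma_1\le\sigma_2$ the integrand $(\sigma_2-\sigma_1)\lvert\nabla u_{\sigma_2}\rvert^2$ is nonnegative almost everywhere, so the left-hand integral is $\ge 0$; hence $\langle(\Lambda(\sigma_1)-\Lambda(\sigma_2))g,g\rangle\ge 0$ for all $g$. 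By the definition of the semidefinite ordering this is precisely $\Lambda(\sigma_2)\preceq\Lambda(\sigma_1)$, i.e.\ $\Lambda(\sigma_1)\succeq\Lambda(\sigma_2)$, as claimed. Equivalently, one could use the lower bound in \eqref{mon_inequ_1}, namely $\int_\Omega \tfrac{\sigma_1}{\sigma_2}(\sigma_2-\sigma_1)\lvert\nabla u_{\sigma_1}\rvert^2\,dx$, whose integrand is again nonnegative since the factor $\sigma_1/\sigma_2$ is strictly positive.

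I do not anticipate any genuine obstacle: all the analytic content—the variational identities and the estimate itself—is already packaged in Lemma~\ref{lemma:mon_inequ}, and positivity of $L^\infty_+(\Omega)$ conductivities guarantees that the ratio $\sigma_1/\sigma_2$ is well defined and positive. The only things to be vigilant about are not accidentally selecting an \emph{upper} bound (the right-hand inequalities), which carries no usable sign information for this conclusion, and not transposing $\sigma_1$ and $\sigma_2$, which would flip the direction of the Loewner inequality.
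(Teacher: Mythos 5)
Your proof is correct and follows exactly the route the paper intends: the paper states the corollary as an immediate consequence of Lemma~\ref{lemma:mon_inequ}, and your argument simply makes that explicit by specializing $\tau=\sigma_1$, $\sigma=\sigma_2$ in the appropriate lower bound and using nonnegativity of the resulting integrand together with the definition of the Loewner ordering.
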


\subsection{Standard and linearized monotonicity method}

We return to the problem of reconstructing the shape $D\in \mathcal{O}_{ad}$ from knowing $\Lambda(\sigma)$ where $\sigma=\sigma_0+(\sigma_1-\sigma_0)\chi_D$. 
Clearly, for any open set $B\subseteq D$, and every constant $\alpha>0$ with $\alpha \leq \sigma_1-\sigma_0$, we have that 
\[
\sigma_0+\alpha \chi_B\leq \sigma \quad \text{ and thus } \quad \Lambda(\sigma_0+\alpha \chi_B)\succeq \Lambda(\sigma),
\]
by Corollary \ref{cor:monotony_simple}. Consequently, by marking all open sets $B$ fulfilling $\Lambda(\sigma_0+\alpha \chi_B)\succeq \Lambda(\sigma)$ for some $\alpha>0$, one would obtain a superset of the unknown shape $D$. The work \cite{harrach2013monotonicity} proved that $D$ can indeed be reconstructed by such monotonicity tests:

\begin{theorem}\label{thm:mono_test_standard}
Let $0<\alpha\leq \sigma_1-\sigma_0$, and $B$ be an open set. Then 
\[
B\subseteq \overline D \quad \text{ if and only if } \quad \Lambda(\sigma_0+\alpha \chi_B)\succeq \Lambda(\sigma).
\]
\end{theorem}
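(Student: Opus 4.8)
The plan is to prove the two directions of the equivalence separately. The forward direction ($B \subseteq \overline{D}$ implies $\Lambda(\sigma_0 + \alpha\chi_B) \succeq \Lambda(\sigma)$) should follow almost immediately from the monotonicity machinery already assembled; the converse (the monotonicity test actually detects $B \not\subseteq \overline{D}$) is where the real work lies and will require a localized-potentials / blow-up argument.

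For the forward direction, suppose $B \subseteq \overline{D}$. Then $\chi_B \leq \chi_D$ pointwise a.e., and since $\alpha \leq \sigma_1 - \sigma_0$ we get $\sigma_0 + \alpha\chi_B \leq \sigma_0 + (\sigma_1-\sigma_0)\chi_D = \sigma$ a.e.\ in $\Omega$. Writing this as $(\sigma_0 + \alpha\chi_B) \preceq \sigma$ in the pointwise ordering, Corollary~\ref{cor:monotony_simple} immediately yields $\Lambda(\sigma_0 + \alpha\chi_B) \succeq \Lambda(\sigma)$. (Strictly speaking $B \subseteq \overline D$ and $\overline D \setminus D$ having measure zero gives $\chi_B \le \chi_D$ a.e.)

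The hard part is the converse, or equivalently its contrapositive: if $B \not\subseteq \overline{D}$, then the operator inequality $\Lambda(\sigma_0 + \alpha\chi_B) \succeq \Lambda(\sigma)$ must fail. Geometrically, $B \not\subseteq \overline D$ means there is an open ball contained in $B \setminus \overline D$, i.e.\ a region where we are testing with conductivity $\sigma_0 + \alpha$ but the true conductivity is just $\sigma_0$. The strategy is to exhibit a single boundary current $g$ for which $\langle(\Lambda(\sigma) - \Lambda(\sigma_0+\alpha\chi_B))g, g\rangle < 0$. I would invoke Lemma~\ref{lemma:mon_inequ}: comparing $\sigma$ against $\tau := \sigma_0 + \alpha\chi_B$ via \eqref{mon_inequ_1}, the sign of $\langle(\Lambda(\tau)-\Lambda(\sigma))g,g\rangle$ is controlled by integrals of $(\sigma - \tau)|\nabla u_\tau|^2$, where $\sigma - \tau$ is \emph{negative} precisely on the ball $B \setminus \overline D$ (and could be positive elsewhere, inside $D$). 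To force the whole quadratic form negative, one must choose $g$ so that the potential's energy $|\nabla u_\tau|^2$ concentrates overwhelmingly in the bad ball $B \setminus \overline D$ while remaining negligible on the region where $\sigma - \tau > 0$. This is exactly what the \textbf{localized potentials} technique (Gebauer, and as used in \cite{harrach2013monotonicity}) provides: for any two disjoint regions, there exist Neumann data driving the energy arbitrarily large on one and arbitrarily small on the other.

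The \textbf{main obstacle}, then, is the localized-potentials construction and its compatibility with the Loewner bounds. Concretely, one picks a small ball $\mathcal{B} \subset B \setminus \overline D$ and a "safe" region (a set separating $\mathcal B$ from $D$, using that $\overline D$ has connected complement so $\mathcal B$ can be joined to $\partial\Omega$ avoiding $\overline D$), and constructs $g_k$ with $\int_{\mathcal B}|\nabla u_{\tau,k}|^2 \to \infty$ while $\int_D |\nabla u_{\tau,k}|^2$ stays bounded. Feeding this into \eqref{mon_inequ_1}, the negative contribution $-\alpha\int_{\mathcal B}|\nabla u_{\tau,k}|^2$ dominates the bounded positive contribution from inside $D$, giving $\langle(\Lambda(\tau)-\Lambda(\sigma))g_k,g_k\rangle < 0$ for large $k$, which contradicts $\Lambda(\tau) \succeq \Lambda(\sigma)$. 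Since this deep ingredient and the full statement are established in \cite{harrach2013monotonicity}, in the present paper I would give the short monotonicity argument for the forward direction explicitly and cite \cite{harrach2013monotonicity} for the converse, noting that the connected-complement assumption in $\mathcal{O}_{ad}$ is exactly what makes the localized-potentials reachability hold.
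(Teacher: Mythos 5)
Your proposal is correct and follows essentially the same route as the paper: the forward direction via $\sigma_0+\alpha\chi_B\leq\sigma$ a.e.\ (using that $\partial D$ is a null set) and Corollary~\ref{cor:monotony_simple}, and the converse via the localized-potentials idea with the details deferred to \cite[Thm~4.1]{harrach2013monotonicity}. Your extra sketch of the localized-potentials blow-up argument (including the role of the connected-complement assumption) is accurate, just more detailed than the paper's citation-only treatment.
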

\begin{proof}
Since $\partial D$ is a null set, $B\subseteq \overline D$ implies that $\sigma_0+\alpha \chi_B\leq \sigma$, 
and thus $\Lambda(\sigma_0+\alpha \chi_B)\succeq \Lambda(\sigma)$ follows from Corollary \ref{cor:monotony_simple} as explained above. 
The converse implication is more involved and utilizes the idea of localized potentials from \cite{gebauer2008localized}. The details can be found in \cite[Thm~4.1]{harrach2013monotonicity}.
\end{proof}

Note that implementing the monotonicity tests in Theorem \ref{thm:mono_test_standard} would be computationally expensive since for each open set $B$
we would have to solve the EIT equation with a new inhomogeneous conductivity in order to calculate $\Lambda(\sigma_0 + \alpha\chi_B)$. 
However, using the variational formulation of the Fr\'echet derivative of $\Lambda$ in \eqref{Frechet_var_form}, one can formulate the following linearized variant of the monotonicity test, that is still capable of reconstructing the exact shape $D$.

\begin{theorem}\label{thm:mono_test_linearized}
Let $0<\alpha\leq \frac{\sigma_0}{\sigma_1} (\sigma_1-\sigma_0)=\sigma_0-\frac{\sigma_0^2}{\sigma_1}$, and $B$ be an open set. Then 
\[
B\subseteq \overline D \quad \text{ if and only if } \quad \Lambda(\sigma_0) + \alpha \Lambda'(\sigma_0) \chi_B\succeq \Lambda(\sigma).
\]
\end{theorem}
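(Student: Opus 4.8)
The plan is to work directly with the quadratic form of the test operator $M_B := \Lambda(\sigma_0) + \alpha\Lambda'(\sigma_0)\chi_B - \Lambda(\sigma)$ and to show it is positive semidefinite exactly when $B\subseteq\overline D$. Writing $u_0:=u^g_{\sigma_0}$ for the background solution, the energy identity for $\Lambda$ together with the Fr\'echet derivative formula \eqref{Frechet_var_form} gives
\[
\langle M_B\, g, g\rangle = \langle(\Lambda(\sigma_0)-\Lambda(\sigma))g, g\rangle - \alpha\int_B |\nabla u_0|^2\,dx .
\]
First I would produce two-sided bounds on $\langle(\Lambda(\sigma_0)-\Lambda(\sigma))g, g\rangle$ by invoking Lemma~\ref{lemma:mon_inequ} with $\tau=\sigma_0$ (so that $u_\tau=u_0$). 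Since $\sigma-\sigma_0=(\sigma_1-\sigma_0)\chi_D$ and $\sigma=\sigma_1$ on $D$, the factor $\tfrac{\sigma_0}{\sigma}(\sigma-\sigma_0)$ collapses to $\tfrac{\sigma_0}{\sigma_1}(\sigma_1-\sigma_0)\chi_D$, yielding the sandwich
\[
(\sigma_1-\sigma_0)\int_D |\nabla u_0|^2\,dx \ \geq\ \langle(\Lambda(\sigma_0)-\Lambda(\sigma))g, g\rangle \ \geq\ \tfrac{\sigma_0}{\sigma_1}(\sigma_1-\sigma_0)\int_D |\nabla u_0|^2\,dx .
\]

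For the implication $B\subseteq\overline D \Rightarrow M_B\succeq 0$ I would use the lower bound above. Because $\partial D$ is a null set, $B\subseteq\overline D$ gives $\int_B|\nabla u_0|^2\,dx = \int_{B\cap D}|\nabla u_0|^2\,dx \leq \int_D|\nabla u_0|^2\,dx$, whence
\[
\langle M_B\, g, g\rangle \ \geq\ \Big(\tfrac{\sigma_0}{\sigma_1}(\sigma_1-\sigma_0) - \alpha\Big)\int_D|\nabla u_0|^2\,dx \ \geq\ 0,
\]
the final step being exactly where the hypothesis $\alpha\leq \tfrac{\sigma_0}{\sigma_1}(\sigma_1-\sigma_0)$ is consumed. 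This direction is elementary once the sandwich is in hand.

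For the converse I would argue by contraposition: assuming $B\not\subseteq\overline D$, choose an open set $W\subseteq B\setminus\overline D$ of positive measure and exhibit a current making $\langle M_B\, g, g\rangle<0$. Using the upper bound from the sandwich, $\langle(\Lambda(\sigma_0)-\Lambda(\sigma))g, g\rangle \leq (\sigma_1-\sigma_0)\int_D|\nabla u_0|^2\,dx$, together with $W\subseteq B$, gives
\[
\langle M_B\, g, g\rangle \ \leq\ (\sigma_1-\sigma_0)\int_D|\nabla u_0|^2\,dx - \alpha\int_W|\nabla u_0|^2\,dx .
\]
The point is to drive the right-hand side below zero, which needs a sequence $(g_k)$ whose background energy concentrates in $W$ while staying negligible on $D$. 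Here I would invoke the localized potentials of \cite{gebauer2008localized}: since $W$ and $D$ are disjoint and $\overline D$ has connected complement, there exist currents $g_k$ with $\int_W|\nabla u_0^{(k)}|^2\,dx\to\infty$ and $\int_D|\nabla u_0^{(k)}|^2\,dx\to 0$. The bound then forces $\langle M_B\, g_k, g_k\rangle\to-\infty$, so $M_B\not\succeq 0$.

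The main obstacle is the localized-potentials step in the converse; everything else reduces to algebra built on Lemma~\ref{lemma:mon_inequ}. Technically one must verify the separation hypotheses of the localized potentials theorem — namely that $W$ can be taken so that $\Omega\setminus(\overline D\cup\overline W)$ remains connected — which is precisely what permits the background energy to be simultaneously localized in $W$ and expelled from $D$. This is the same analytic ingredient underlying the converse of the non-linearized Theorem~\ref{thm:mono_test_standard}, so the construction transfers directly; the remainder is just the explicit computation of the test form and the two-sided estimate.
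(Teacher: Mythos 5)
Your proposal is correct and follows essentially the same route as the paper: the forward implication is the paper's argument built on Lemma~\ref{lemma:mon_inequ} (with $\tau=\sigma_0$) combined with the Fr\'echet derivative formula \eqref{Frechet_var_form}, and your converse by contraposition with energy blow-up on $W\subseteq B\setminus\overline D$ and vanishing energy on $D$ is precisely the localized-potentials argument from \cite{gebauer2008localized} that the paper defers to \cite[Thm~4.3]{harrach2013monotonicity}. You simply spell out the details that the paper cites, so there is no substantive difference in approach.
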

\begin{proof}
By Lemma~\ref{lemma:mon_inequ}, we obtain
\[
\langle (\Lambda(\sigma)-\Lambda(\sigma_0))g,g\rangle \leq \int_\Omega \frac{\sigma_0}{\sigma}(\sigma_0-\sigma)\vert \nabla u_{\sigma_0}\vert^2 dx
\leq -\alpha \int_\Omega \chi_B \vert \nabla u_{\sigma_0}\vert^2 dx.
\]
Using \eqref{Frechet_var_form}, it follows that
\[
\Lambda(\sigma)-\Lambda(\sigma_0)\preceq \alpha \Lambda'(\sigma_0) \chi_D, \quad \text{ and thus } \quad
\Lambda(\sigma_0) + \alpha \Lambda'(\sigma_0) \chi_D \succeq \Lambda(\sigma).
\]
Again, the converse implication can be proven using the idea of localized potentials, and we refer the reader for the details in \cite[Thm~4.3]{harrach2013monotonicity}.
\end{proof}

The linearized monotonicity tests can very efficiently implemented as they only require the calculation of $\Lambda'(\sigma_0) \chi_D$, which 
only requires the solution $u_{\sigma_0}$ of the conductivity equation with the background conductivity $\sigma_0$. Note that it allows for a convergent implementation for noisy data. Let $\Lambda^\delta\in \mathcal L(L^2_{\Diamond}(\partial\Omega))$ be a symmetric noisy version of $\Lambda(\sigma)-\Lambda(\sigma_0)$ with 
\[
\Vert \Lambda^\delta - \left(\Lambda(\sigma)-\Lambda(\sigma_0)\right) \Vert_{\mathcal{L}(L^2_{\diamond}(\partial\Omega))} < \delta,
\]
and $0<\alpha\leq \frac{\sigma_0}{\sigma_1} (\sigma_1-\sigma_0)$. Then, for every open set $B\subseteq \Omega$, there exists a noise level $\hat \delta>0$, such that for all $0<\delta<\hat\delta$:
\begin{equation}\label{mono_test_noise}
B\subseteq \overline D\quad \text{if and only if} \quad 
\alpha \Lambda'(\sigma_0) \chi_B\succeq \Lambda^\delta-\delta I,
\end{equation}
cf.\ \cite[Remark~3.5]{harrach2013monotonicity}, and \cite{garde2017convergence}.

\subsection{The monotonicity-based regularization method}\label{subsect:mon_regularization}

In numerical experiments (see the next subsection), the monotonicity method can be observed to be rather noise-sensitive. 
To improve its robustness, a combination with a data fitting functional has been developed in the works \cite{harrach2016enhancing,harrach2018monotonicity}.
The main idea is to minimize a discretized version of the linearized data fitting residual
\[
r:\ L^\infty(\Omega)\to \mathcal L(L^2_\Diamond(\partial \Omega)) ,\quad r(\kappa):  =  \Lambda(\sigma) -\Lambda(\sigma_0) - \Lambda^{\prime}(\sigma_0)\kappa
\]
with a constraint that is based on the monotonicity method. 

In order to explain this in detail, let $\{B_k\}_{k=1}^n$ form a partition of $\Omega$, i.e., 
\[
\overline \Omega=\bigcup_{k=1}^n \overline{B_k}, \quad \text{ with nonempty, pairwise disjoint Lipschitz domains $B_1,\ldots, B_n$.}
\]
The set of admissible parameters for minimizing the data fitting functional is defined as
\[
\mathcal P:=\left\{  \kappa\in L^\infty(\Omega):  \kappa=\sum_{k=1}^n a_k\chi_{B_k},\quad   0\leq a_k\leq  \min\{ \overline c,c_k \}\right\},
\]
where (as in Theorem \ref{thm:mono_test_linearized}) $0<\overline c\leq\sigma_0-\frac{\sigma_0^2}{\sigma_1}$, and
\[
c_k:=\max \{ \alpha>0:\ \Lambda(\sigma_0) + \alpha \Lambda'(\sigma_0) \chi_{B_k}\succeq \Lambda(\sigma)\}.
\]
It can be shown (cf.\ \cite[Sect.~4.2]{harrach2016enhancing}) that
\[
c_k=-\frac{1}{\lambda_\text{min} (L^{-1} \Lambda'(\sigma_0)\chi_{B_k} L^{-*})}
\]
where $\lambda_\text{min}(\cdot)$ denotes the smallest (most negative) eigenvalue, and $\Lambda(\sigma_0)-\Lambda(\sigma)=L L^*$ is the Cholesky factorization.

Let $g_1,\ldots,g_m\in L^2_\Diamond(\partial \Omega)$ denote $n$ orthonormal boundary currents, and $R(\kappa)$ be the Galerkin projection of 
the linearized data fitting residual to the span of these currents, i.e.,
\[
R:\ L^\infty(\Omega)\to \R^{m\times m}, \quad R(\kappa):=\left( \langle r(\kappa) g_i, g_j\rangle \right)_{i,j=1}^m.
\]

Then we have the following result for noiseless data.
\begin{theorem}
The quadratic minimization problem under box constraints
\[
\Vert{R(\kappa)}\Vert_F^2\to \text{min!}\quad \text{s.t.} \quad \kappa\in \mathcal P
\]
possesses a unique solution $\hat \kappa\in \mathcal P$. The support of $\hat \kappa$ agrees with the shape $D$ up to the utilized partition of $\Omega$, i.e.
\[
B_k\subseteq \mathrm{supp} \hat\kappa \quad \text{ if and only if } \quad B_k\subseteq D.
\]
\end{theorem}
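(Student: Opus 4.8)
The plan is to reformulate the problem as a finite-dimensional convex quadratic program and then to read the support off from the interplay between the box constraints (which encode the monotonicity tests) and the first-order optimality conditions.

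First I would parametrize each $\kappa\in\mathcal P$ by its coefficient vector $a=(a_1,\dots,a_n)$ and write the objective as $F(a)=\|R_0-\sum_k a_kM_k\|_F^2$, where $R_0$ is the Galerkin matrix of $\Lambda(\sigma)-\Lambda(\sigma_0)$ and $M_k$ the Galerkin matrix of $\Lambda'(\sigma_0)\chi_{B_k}$. Since $r$ is affine in $\kappa$ and $\Lambda'(\sigma_0)$ is linear, $F$ is a convex quadratic in $a$, and $\mathcal P$ is the compact box $\prod_k[0,d_k]$ with $d_k:=\min\{\overline c,c_k\}$. Existence of a minimizer then follows from continuity and compactness. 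Uniqueness follows from strict convexity: the Hessian of $F$ is twice the Frobenius Gram matrix $(\langle M_k,M_l\rangle_F)_{k,l}$, which is positive definite precisely when the $M_k$ are linearly independent, i.e.\ when the projected linearized map $a\mapsto\sum_k a_kM_k$ is injective on the partition. I would record this (guaranteed for sufficiently rich current patterns by injectivity of $\Lambda'(\sigma_0)$ via localized potentials) as the condition securing uniqueness.

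Next I would evaluate the bounds $d_k$. For a block with $B_k\subseteq\overline D$, Theorem~\ref{thm:mono_test_linearized} applied with $\alpha=\overline c\le\sigma_0-\tfrac{\sigma_0^2}{\sigma_1}$ gives $\Lambda(\sigma_0)+\overline c\,\Lambda'(\sigma_0)\chi_{B_k}\succeq\Lambda(\sigma)$, so $c_k\ge\overline c$ and $d_k=\overline c>0$. For a block with $B_k\not\subseteq\overline D$, the same theorem shows the test fails for every admissible $\alpha>0$, and the localized-potentials argument behind its converse drives the relevant quadratic-form ratio to infinity, so $c_k=0$ and $d_k=0$. Since the constraint then forces $\hat a_k=0$ whenever $B_k\not\subseteq\overline D$, the inclusion $B_k\subseteq\operatorname{supp}\hat\kappa$ already implies $B_k\subseteq\overline D$, which is the ``only if'' direction (the distinction between $D$ and $\overline D$ being immaterial for blocks not straddling $\partial D$). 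The heart of the matter is then a sign property valid on all of $\mathcal P$: I claim $r(\kappa)\preceq0$ for every $\kappa\in\mathcal P$. Combining the upper monotonicity estimate from Lemma~\ref{lemma:mon_inequ}, namely $\langle(\Lambda(\sigma)-\Lambda(\sigma_0))g,g\rangle\le-(\sigma_0-\tfrac{\sigma_0^2}{\sigma_1})\int_D|\nabla u_{\sigma_0}^g|^2$, with \eqref{Frechet_var_form}, the bounds $0\le a_k\le\overline c\le\sigma_0-\tfrac{\sigma_0^2}{\sigma_1}$ and $\operatorname{supp}\kappa\subseteq\overline D$, the two contributions cancel up to sign and leave $\langle r(\kappa)g,g\rangle\le0$ for all $g$. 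Hence $R(\kappa)\preceq0$ and each $M_k\preceq0$.

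Finally, for the ``if'' direction I would argue by contradiction: suppose $B_k\subseteq D$ but $\hat a_k=0$. Optimality at the lower bound gives $\partial_{a_k}F(\hat a)=-2\langle R(\hat\kappa),M_k\rangle_F\ge0$, while negative semidefiniteness yields $\langle R(\hat\kappa),M_k\rangle_F=\operatorname{tr}\!\big((-R(\hat\kappa))(-M_k)\big)\ge0$; hence $\langle R(\hat\kappa),M_k\rangle_F=0$, which for two negative semidefinite matrices forces $R(\hat\kappa)M_k=0$. Writing $-M_k=T_k^*T_k$ with $T_kc=\sum_i c_i\nabla u_{\sigma_0}^{g_i}|_{B_k}$, I can select a nonzero $c$ in the range of $M_k$; then $g=\sum_i c_ig_i$ satisfies $\langle r(\hat\kappa)g,g\rangle=0$ while $\int_{B_k}|\nabla u_{\sigma_0}^g|^2>0$. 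The main obstacle, and the crux of the argument, is to contradict this: re-running the estimate above but noting that $\hat a_k=0$ now deletes the $B_k$-contribution from $\int_\Omega\hat\kappa|\nabla u_{\sigma_0}^g|^2$, I expect to obtain the strict bound $\langle r(\hat\kappa)g,g\rangle\le-\overline c\int_{B_k}|\nabla u_{\sigma_0}^g|^2<0$, contradicting $\langle r(\hat\kappa)g,g\rangle=0$. This forces $\hat a_k>0$, i.e.\ $B_k\subseteq\operatorname{supp}\hat\kappa$, completing the characterization.
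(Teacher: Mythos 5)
First, note that the paper itself does not prove this theorem: its ``proof'' is the citation \cite[Thm.~3.2]{harrach2016enhancing}, so your attempt has to be measured against the argument given there. Most of your structure is sound and is in fact close to the cited proof: existence by compactness; the identification of the box bounds ($d_k=\bar c>0$ for $B_k\subseteq\overline D$ by Theorem~\ref{thm:mono_test_linearized}, and $a_k$ forced to $0$ for $B_k\not\subseteq\overline D$, using that $\Lambda'(\sigma_0)\chi_{B_k}\preceq 0$ makes the test monotone in $\alpha$, so failure for small $\alpha$ implies failure for all $\alpha>0$); the resulting sign property $r(\kappa)\preceq 0$ on all of $\mathcal P$, which indeed only works because the constraints force $\mathrm{supp}\,\kappa\subseteq\overline D$ and $\kappa\le\bar c\le\sigma_0-\sigma_0^2/\sigma_1$; and the ``if'' direction via the KKT condition at the lower bound, $\mathrm{tr}(R(\hat\kappa)M_k)=0\Rightarrow R(\hat\kappa)M_k=0$ for two negative semidefinite matrices, followed by the strict pixel-wise estimate $\langle r(\hat\kappa)g,g\rangle\le-(\bar c-\hat a_k)\int_{B_k}|\nabla u_{\sigma_0}^g|^2$ obtained by deleting the $B_k$-contribution (you should add the small remark that $M_k\neq 0$, by unique continuation for the $\sigma_0$-equation, so that a nonzero $c$ in its range exists).

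The genuine gap is uniqueness. You derive it from strict convexity, i.e.\ from linear independence of the Galerkin matrices $M_k$, and you yourself only ``record'' this as a condition on the current patterns. The theorem asserts uniqueness unconditionally, for an arbitrary fixed orthonormal family $g_1,\dots,g_m$ and an arbitrary partition; linear independence of the $M_k$ is simply not available: they live in the $m(m+1)/2$-dimensional space of symmetric $m\times m$ matrices, so independence is impossible as soon as $n>m(m+1)/2$, and injectivity of the infinite-dimensional derivative $\Lambda'(\sigma_0)$ does not pass to its Galerkin projection onto finitely many fixed currents. So as written the first assertion of the theorem is not proved. The repair needs no independence and uses exactly the tools you already deploy for the support statement: since $R(\kappa)\preceq 0$ and $M_k\preceq 0$ on $\mathcal P$, the objective is non-increasing in each $a_k$, so the maximal element $\kappa_{\max}=\sum_k\min\{\bar c,c_k\}\chi_{B_k}$ is a minimizer; and if $\kappa\in\mathcal P$ has $a_k<\min\{\bar c,c_k\}$ for some $k$ (necessarily $B_k\subseteq D$), then the identity $\|R(\kappa)\|_F^2-\|R(\kappa_{\max})\|_F^2=\mathrm{tr}\bigl((R(\kappa)-R(\kappa_{\max}))(R(\kappa)+R(\kappa_{\max}))\bigr)$ together with your trace/range argument and the same strict estimate $\langle r(\kappa)g,g\rangle\le-(\bar c-a_k)\int_{B_k}|\nabla u_{\sigma_0}^g|^2<0$ shows this difference is strictly positive. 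Hence $\kappa_{\max}$ is the unique minimizer (which also re-derives the support characterization); this is the route taken in \cite{harrach2016enhancing}, whereas strict convexity of the quadratic program is neither needed nor generally true.
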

\begin{proof}
\cite[Thm.~3.2]{harrach2016enhancing})
\end{proof}

For symmetric noisy data $\Lambda^\delta\in \mathcal L(L^2_{\Diamond}(\partial\Omega))$ with 
\[
\Vert \Lambda^\delta - \left(\Lambda(\sigma)-\Lambda(\sigma_0)\right) \Vert_{\mathcal{L}(L^2_{\diamond}(\partial\Omega))} < \delta,
\]
we define $R^\delta(\kappa):=\left( \langle r^\delta(\kappa) g_i, g_j\rangle \right)_{i,j=1}^m$ with 
$r^\delta(\kappa):  =  \Lambda^\delta - \Lambda^{\prime}(\sigma_0)\kappa$, and
\[
\mathcal P^\delta:=\left\{  \kappa\in L^\infty(\Omega):  \kappa=\sum_{k=1}^n a_k\chi_{B_k},\quad   0\leq a_k\leq  \min\{ \overline c,c^\delta_k \}\right\},
\]
with  
\[
c^\delta_k:=\max \{ \alpha>0:\  -\alpha \Lambda'(\sigma_0) \chi_{B_k}\preceq |\Lambda^\delta| + \delta I \}= -\frac{1}{\lambda_\text{min} (L_\delta^{-1} \Lambda'(\sigma_0)\chi_{B_k} L_\delta^{-*})},
\]
and the Cholesky factorization $|\Lambda^\delta|+\delta I=L_\delta L_\delta^*$. Then we have the following result.

\begin{theorem}
The quadratic minimization problem under box constraints
\[
\Vert{R^\delta(\kappa)}\Vert_F^2\to \text{min!}\quad \text{s.t.} \quad \kappa\in \mathcal P^\delta
\]
possesses a minimizer $\kappa\delta\in \mathcal P^\delta$. For each sequence of minimizers $\kappa^\delta$ with $\delta\to 0$ it holds that
$\kappa^\delta\to \hat \kappa$.
\end{theorem}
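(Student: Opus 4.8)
The plan is to read this as a standard regularization-convergence statement and split it into an existence part and a stability part, reusing the uniqueness of the noiseless minimizer established in the preceding theorem. For existence, I would identify $\kappa=\sum_{k=1}^n a_k\chi_{B_k}$ with its coefficient vector $a=(a_1,\ldots,a_n)\in\R^n$. Since $\kappa\mapsto\Lambda'(\sigma_0)\kappa$ is linear, the map $\kappa\mapsto R^\delta(\kappa)$ is affine, so $\Vert R^\delta(\kappa)\Vert_F^2$ is a convex quadratic in $a$. Because $\Lambda'(\sigma_0)\chi_{B_k}\preceq 0$ forces $c_k^\delta\geq 0$, the admissible set $\mathcal P^\delta$ corresponds to the box $\prod_{k=1}^n[0,\min\{\overline c,c_k^\delta\}]\subset\R^n$, which is nonempty (it contains $a=0$) and, since $\overline c<\infty$, compact. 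A continuous function on a nonempty compact set attains its minimum, which yields a minimizer $\kappa^\delta\in\mathcal P^\delta$.

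For the convergence $\kappa^\delta\to\hat\kappa$ I would first record the two continuity facts that drive the argument. First, $r^\delta(\kappa)-r(\kappa)=\Lambda^\delta-(\Lambda(\sigma)-\Lambda(\sigma_0))$ does not depend on $\kappa$ and has operator norm below $\delta$; as the $g_i$ are orthonormal, every entry of $R^\delta(\kappa)-R(\kappa)$ is bounded by $\delta$, so $\Vert R^\delta(\kappa)-R(\kappa)\Vert_F\leq m\delta$ uniformly in $\kappa$, giving $R^\delta\to R$ uniformly. Second, I would establish the coefficientwise convergence of the constraints $\min\{\overline c,c_k^\delta\}\to\min\{\overline c,c_k\}$, which reduces to $c_k^\delta\to c_k$ and in turn follows from $|\Lambda^\delta|+\delta I\to|\Lambda(\sigma)-\Lambda(\sigma_0)|=\Lambda(\sigma_0)-\Lambda(\sigma)$ in operator norm together with continuous dependence of $c_k^\delta=-1/\lambda_{\min}(L_\delta^{-1}\Lambda'(\sigma_0)\chi_{B_k}L_\delta^{-*})$ on the Cholesky factor $L_\delta$, as in \cite{harrach2016enhancing}.

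With these facts I would run the usual subsequence argument. Any sequence of minimizers $\kappa^\delta$ lies in the fixed box $\prod_k[0,\overline c]$, hence is bounded, so it suffices to show that every convergent subsequence $\kappa^{\delta_j}\to\kappa^*$ satisfies $\kappa^*=\hat\kappa$. Passing $0\leq a_k^{\delta_j}\leq\min\{\overline c,c_k^{\delta_j}\}$ to the limit and using $c_k^{\delta_j}\to c_k$ shows $\kappa^*\in\mathcal P$. To identify $\kappa^*$ as a minimizer of the limit problem I would construct a recovery sequence for $\hat\kappa=\sum_k\hat a_k\chi_{B_k}$ by setting $\tilde a_k^{\delta_j}:=\min\{\hat a_k,\min\{\overline c,c_k^{\delta_j}\}\}$; this gives $\tilde\kappa^{\delta_j}\in\mathcal P^{\delta_j}$ and, since $\hat a_k\leq\min\{\overline c,c_k\}$, also $\tilde\kappa^{\delta_j}\to\hat\kappa$. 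Optimality of $\kappa^{\delta_j}$ then reads $\Vert R^{\delta_j}(\kappa^{\delta_j})\Vert_F^2\leq\Vert R^{\delta_j}(\tilde\kappa^{\delta_j})\Vert_F^2$; letting $j\to\infty$ and using uniform convergence of $R^{\delta_j}$ to $R$ and continuity of $R$ gives $\Vert R(\kappa^*)\Vert_F^2\leq\Vert R(\hat\kappa)\Vert_F^2$. Since $\kappa^*\in\mathcal P$ and $\hat\kappa$ is the unique minimizer of $\Vert R(\cdot)\Vert_F^2$ over $\mathcal P$ by the noiseless theorem above, this forces $\kappa^*=\hat\kappa$; as the limit is independent of the subsequence, the full sequence converges.

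The step I expect to be the main obstacle is the convergence of the constraint sets, i.e. $c_k^\delta\to c_k$. Unlike $R^\delta\to R$, which is immediate, this involves a generalized smallest-eigenvalue (equivalently Cholesky) quantity perturbed through a compact operator, where $L_\delta$ and $L_\delta^{-1}$ need not depend continuously on the data in a naive operator-norm sense. I would resolve this by working with the Galerkin-projected finite-dimensional matrices, on which $|\Lambda^\delta|+\delta I$ is positive definite and both the Cholesky factor and the smallest generalized eigenvalue depend continuously on it, thereby reducing the claim to finite-dimensional perturbation theory as carried out in \cite{harrach2016enhancing}.
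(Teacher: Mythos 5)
The paper does not actually prove this statement: its ``proof'' is a one-line citation of \cite[Thm.~3.8]{harrach2016enhancing}, so any complete argument you give is necessarily a different (fuller) route. Your proposal follows what is essentially the standard argument behind that reference: existence by compactness of the coefficient box and continuity of the convex quadratic objective, then a subsequence argument combining uniform convergence $R^\delta\to R$ (which you correctly get from the $\kappa$-independent perturbation $\Lambda^\delta-(\Lambda(\sigma)-\Lambda(\sigma_0))$ and orthonormality of the $g_i$), convergence of the box constraints, admissibility of a recovery sequence, and uniqueness of the noiseless minimizer $\hat\kappa$ from the preceding theorem. This structure is sound, and the subsequence/recovery-sequence bookkeeping is correct.

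The one place where your argument is thinner than it should be is exactly the step you flag, $c_k^\delta\to c_k$, and your proposed fix is slightly off target: the constraints $c_k$ and $c_k^\delta$ are defined through operator inequalities on $L^2_\Diamond(\partial\Omega)$, not through the Galerkin matrices; the Cholesky/eigenvalue formula quoted in the paper is only the discrete implementation, and in the infinite-dimensional setting $\Lambda(\sigma_0)-\Lambda(\sigma)$ is compact, so ``continuous dependence on $L_\delta$'' degenerates as $\delta\to 0$. The cleaner route, which is also how the cited work exploits the structure, is one-sided: from $\Vert\Lambda^\delta-(\Lambda(\sigma)-\Lambda(\sigma_0))\Vert<\delta$ one gets $|\Lambda^\delta|+\delta I\succeq-\Lambda^\delta+\delta I\succeq\Lambda(\sigma_0)-\Lambda(\sigma)$, hence $c_k^\delta\geq c_k$ and in particular $\hat\kappa\in\mathcal P^\delta$, so no recovery sequence is needed at all; for the reverse direction, $\limsup_{\delta\to0}c_k^\delta\leq c_k$ follows by testing the defining inequality with fixed $g$ and using $\Vert\,|\Lambda^\delta|-(\Lambda(\sigma_0)-\Lambda(\sigma))\Vert\to0$ (continuity of the modulus of self-adjoint operators, trivial in the discretized setting). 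With that replacement your proof closes; as written, the appeal to finite-dimensional perturbation of Cholesky factors does not faithfully treat the operator-level constraint sets appearing in the statement.
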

\begin{proof}
\cite[Thm.~3.8]{harrach2016enhancing})
\end{proof}

\subsection{Numerical results with the monotonicity method}\label{sec_linear}

For the following numerical examples, we use a FEM mesh of the geometry $\Omega=[0,1]\times [0,1]$ with $5248$ elements, $\sigma_1=2$, and $\sigma_0=1$. We consider the Neumann boundary data given by:
\begin{equation}\label{data}
g_k(x,y)= \sin(k\pi y)\chi_{\{x=0\}}- \sin(k\pi y)\chi_{\{x=1\}}+  \cos(k\pi x)\chi_{\{y=0\}}- \cos(k\pi x)\chi_{\{y=1\}},
\end{equation}
where $k=1,\ldots,m$, $m=23$, and $\chi_{\{x=a\}}$ is the indicator function on the boundary $x=a$.
We denote  by $\bar A\in \R^{m\times m}$ the discrete version of an operator $A\in \mathcal L (L^2_\Diamond(\partial \Omega))$, i.e., its Galerkin projection to the span of $g_1,\ldots,g_m$.

We used $100$ test balls. Following Theorem \ref{thm:mono_test_linearized}, for each test ball $B$, we computed the eigenvalues of
\[
\overline \Lambda(\sigma_0)- \overline\Lambda(\sigma)+\alpha \overline{\Lambda'}(\sigma_0)\chi_B 
\]
with $\alpha=1/2$. A test ball is marked as being inside the inclusion if all of these eigenvalues are positive.

We also considered the noisy data case 
\[
\overline\Lambda^\delta:=\overline \Lambda(\sigma)- \overline\Lambda(\sigma_0) + \delta \Vert \overline \Lambda(\sigma)- \overline\Lambda(\sigma_0)\Vert_F
\frac{E}{\Vert E\Vert_F},
\]
where the entries of $E\in \R^{m\times m}$ are normally distributed random variables with a mean of zero and a standard deviation of one.  
In the noisy data case, we mark those test balls for which all eigenvalues of 
\[
\alpha \overline{\Lambda'}(\sigma_0)\chi_B -\overline\Lambda^\delta+ \delta I
\]
are positive.

 \begin{figure}[H]
\begin{center}
\subfloat{\includegraphics[scale=0.27]{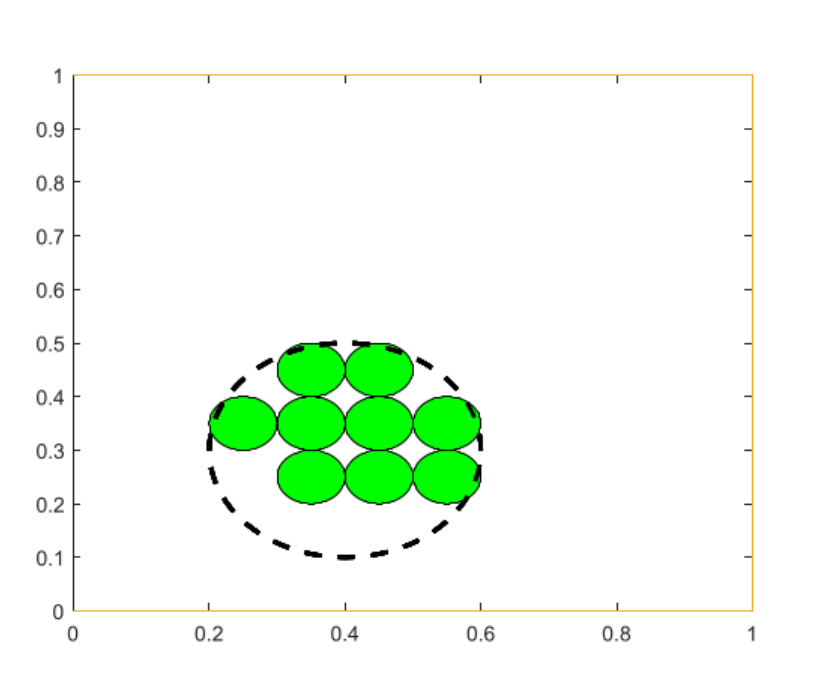}}\hfill
\subfloat{\includegraphics[scale=0.27]{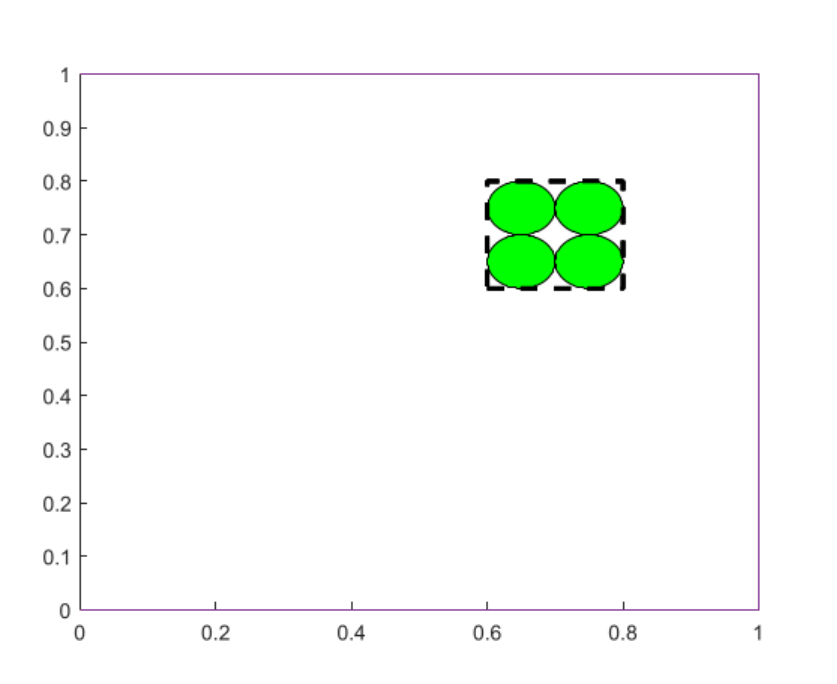}}\hfill
\subfloat{\includegraphics[scale=0.27]{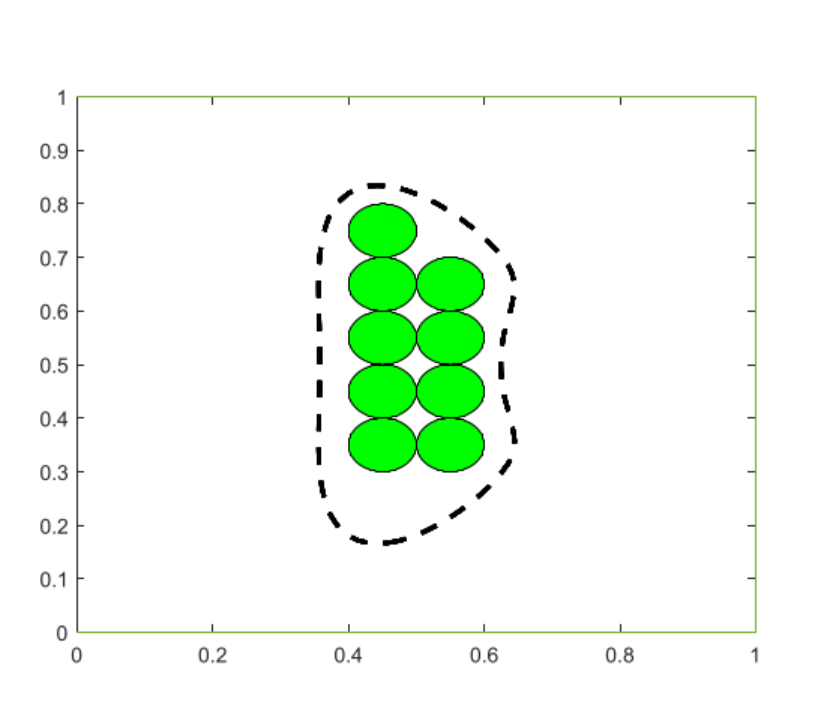}}\hfill
\subfloat{\includegraphics[scale=0.27]{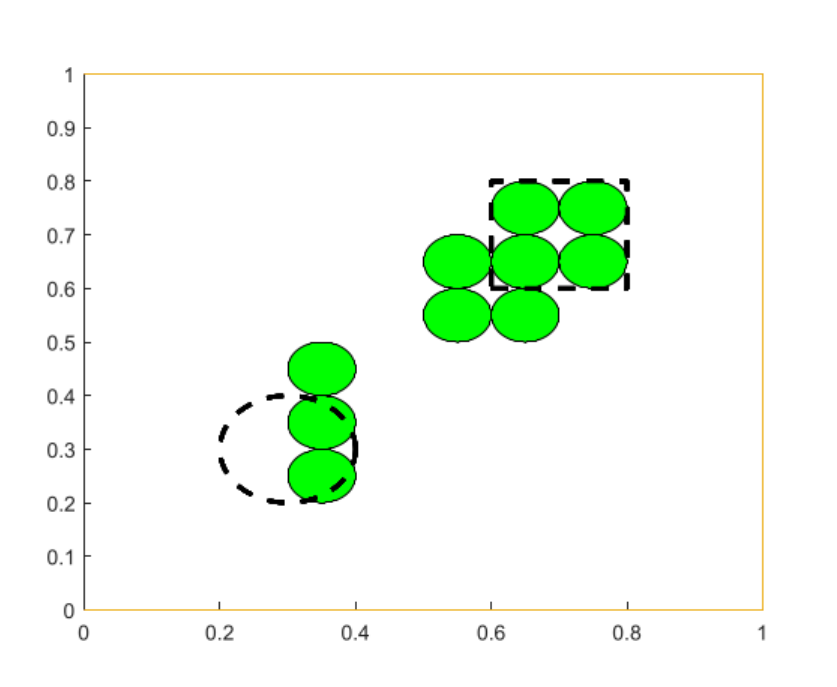}}
\end{center}
\caption{Reconstruction with the linearized monotonicity method in the case of noise-free data}\label{fig1}
\end{figure}
 \begin{figure}[H]
\begin{center}
\subfloat{\includegraphics[scale=0.27]{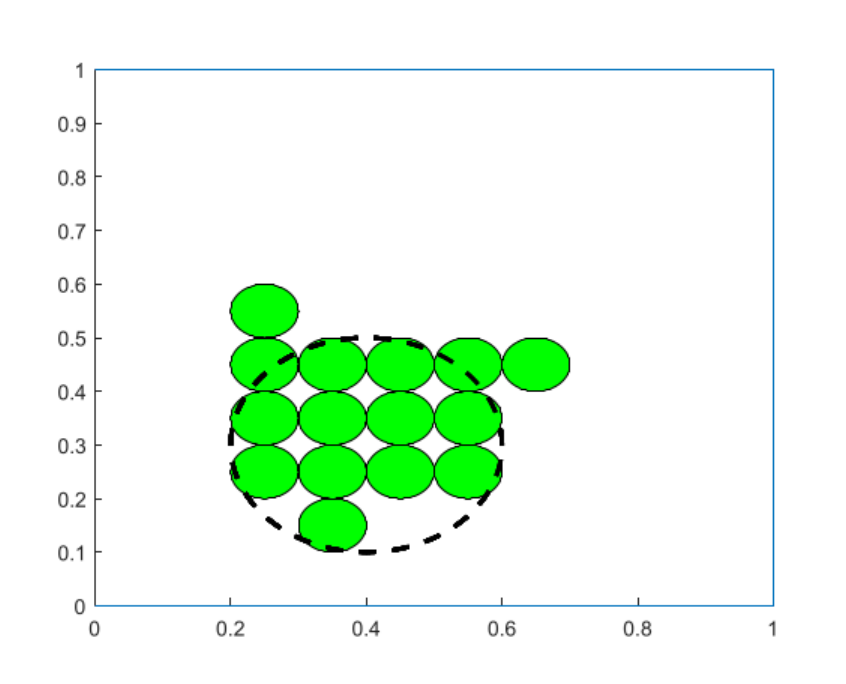}}\hfill
\subfloat{\includegraphics[scale=0.27]{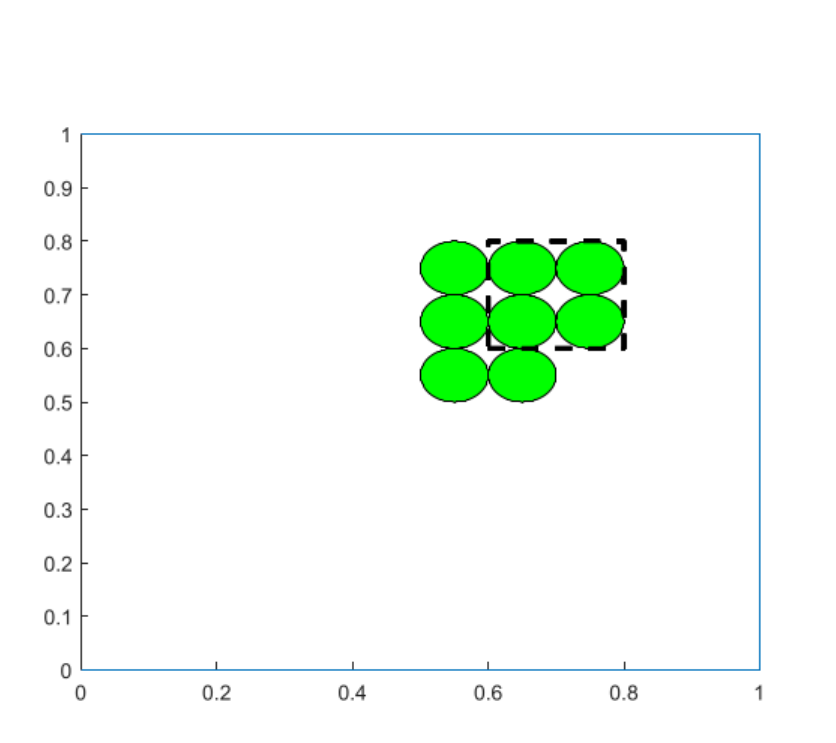}}\hfill
\subfloat{\includegraphics[scale=0.27]{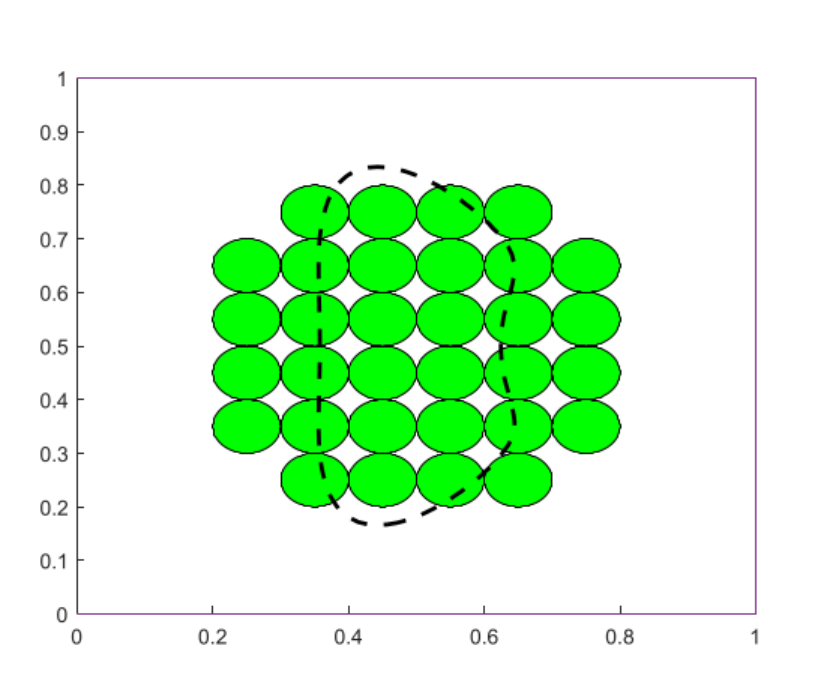}}\hfill
\subfloat{\includegraphics[scale=0.27]{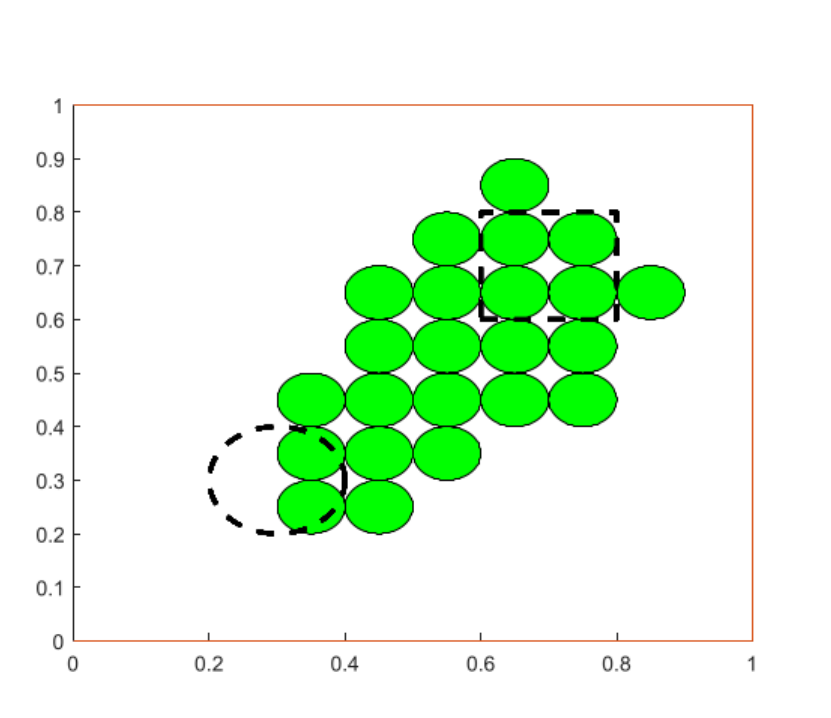}}
\end{center}
\caption{Reconstruction with the linearized monotonicity method in the case of noisy data, $\delta=0.1\%$}\label{fig2}
\end{figure}
Figures \ref{fig1} and \ref{fig2} show numerical results using the linearized monotonicity method for both noise-free and noisy data (with $\delta=0.001$). 
The numerical reconstructions comply with the theoretical results, but they also show high noise sensitivity.

We now turn to the monotonicity-based regularization method. We use the same geometry setting as before, but now use $n=100$ square pixels $B_k$ as a partition of $\Omega$. Denote $S_k:=\overline{\Lambda'}(\sigma_0)\chi_{B_k}\in \R^{m\times m}$, $k=1,\ldots,n$.
Following Section~\ref{subsect:mon_regularization}, for the noiseless case, we set
\[
c_k:=\max\{\alpha>0:\ \overline \Lambda(\sigma_0)+\alpha S_k \succeq \overline\Lambda(\sigma)\},
\]
$\overline c:=1/2$, and minimize
\[
\left\Vert \overline\Lambda(\sigma) - \overline \Lambda(\sigma_0) - \sum_{k=1}^n a_k S_k \right\Vert_{F}^2\to \mathrm{min!}\quad
s.t.\ \quad 0\leq a_k\leq \min\{ \bar c, c_k\}.
\]
In the noisy data case, we use
\[
c_k^\delta:=\max\{\alpha>0:\ -\alpha S_k \preceq |\overline \Lambda^\delta| +\delta I\},
\]
and minimize
\[
\left\Vert \overline \Lambda^\delta - \sum_{k=1}^n a_k S_k \right\Vert_{F}^2\to \mathrm{min!}\quad
s.t.\ \quad 0\leq a_k\leq \min\{ \bar c, c_k^\delta\}.
\]

\begin{figure}[H]
\begin{center}
\subfloat{\includegraphics[scale=0.27]{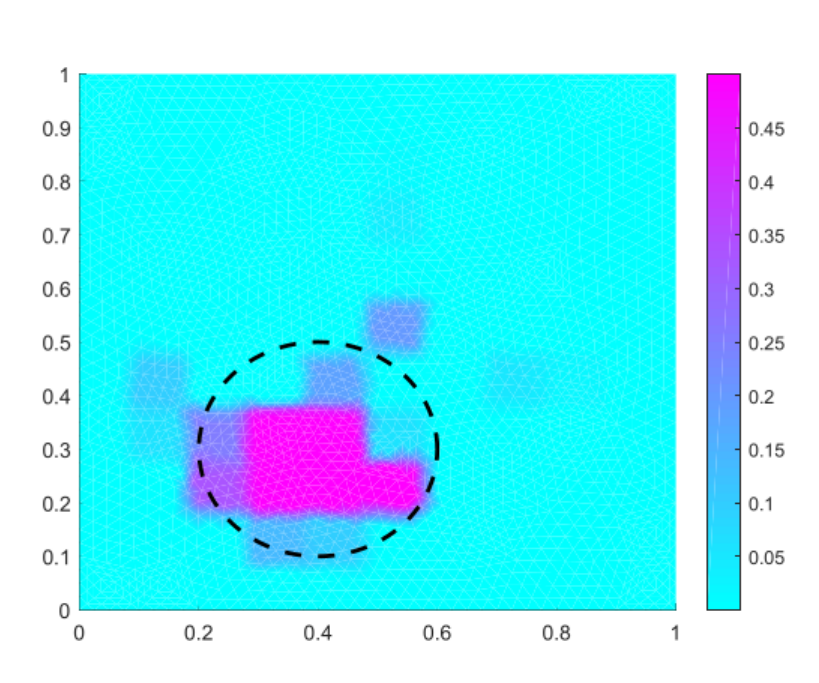}}\hfill
\subfloat{\includegraphics[scale=0.27]{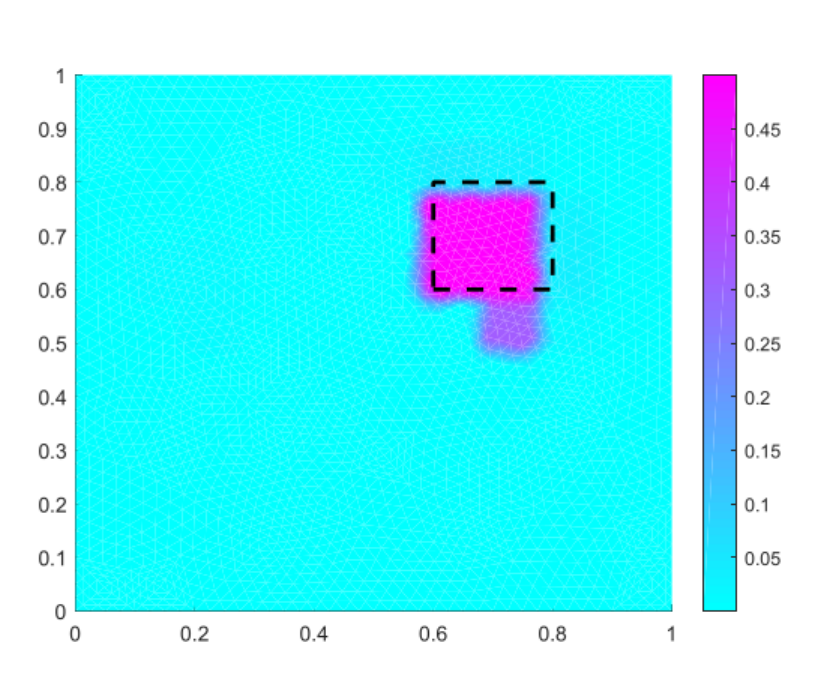}}\hfill
\subfloat{\includegraphics[scale=0.27]{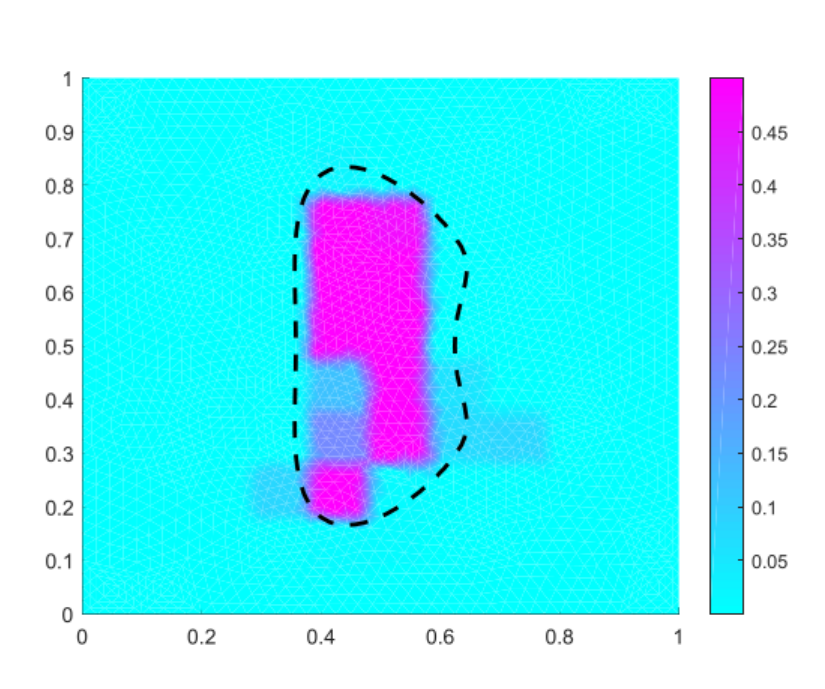}}\hfill
\subfloat{\includegraphics[scale=0.27]{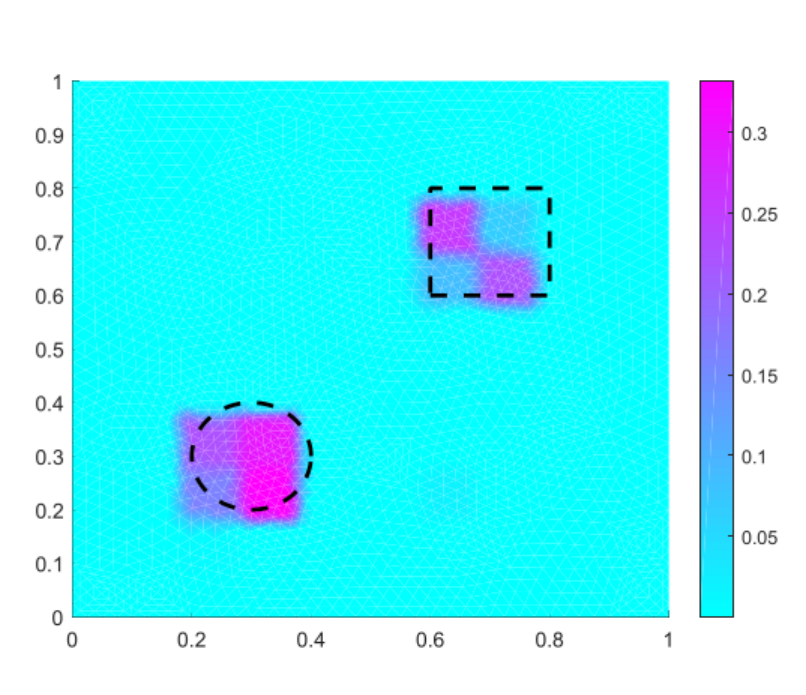}}
\end{center}
\caption{Reconstruction with the monotonicity-based regularization method in the case of noise-free data.}\label{fig_mon_reg_nonoise}
\end{figure}

 \begin{figure}[H]
\begin{center}
\subfloat{\includegraphics[scale=0.27]{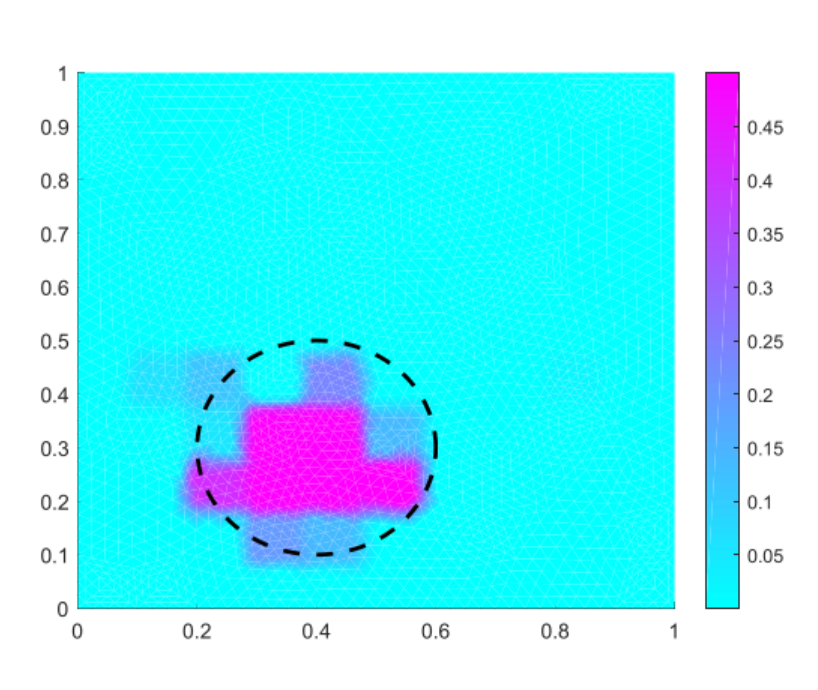}}\hfill
\subfloat{\includegraphics[scale=0.27]{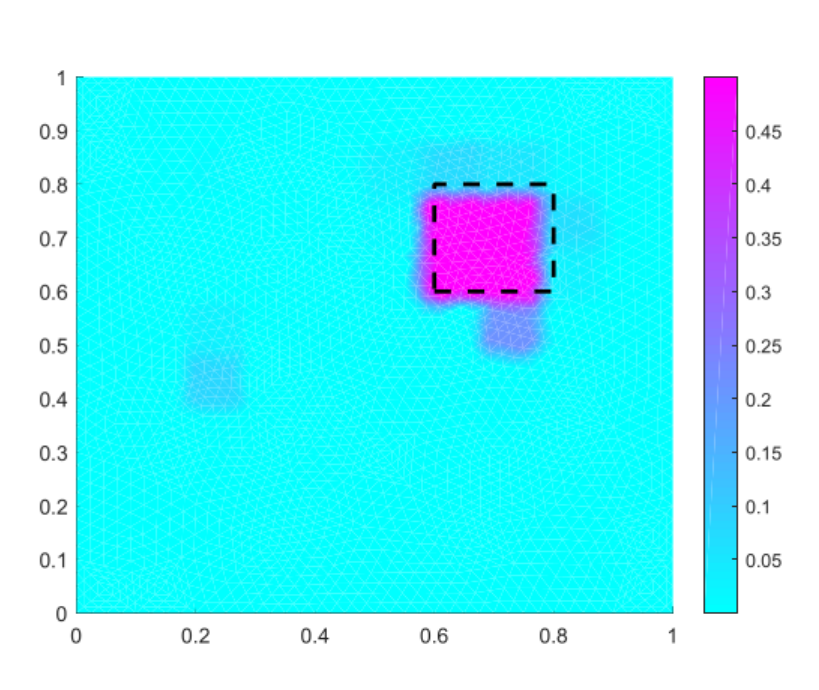}}\hfill
\subfloat{\includegraphics[scale=0.27]{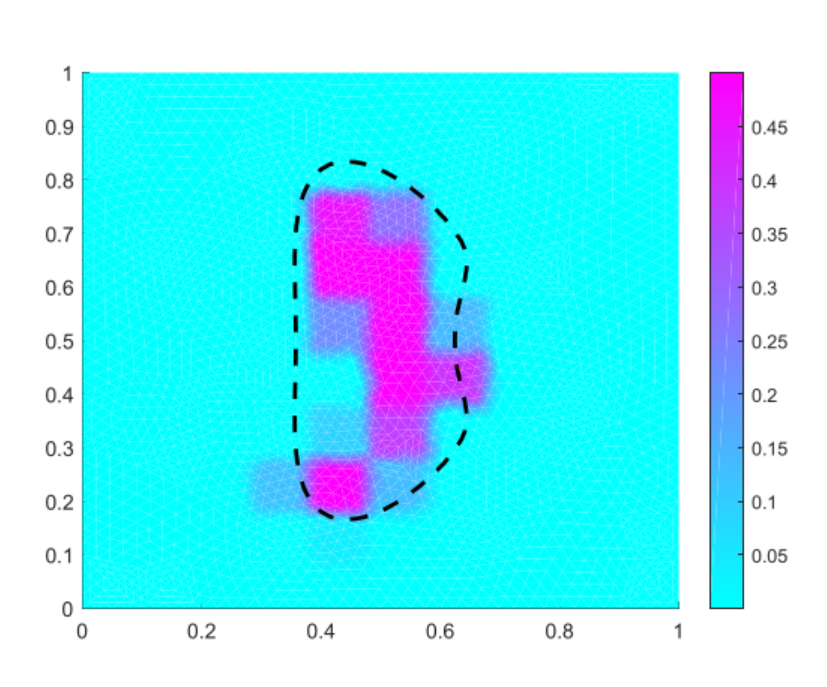}}\hfill
\subfloat{\includegraphics[scale=0.27]{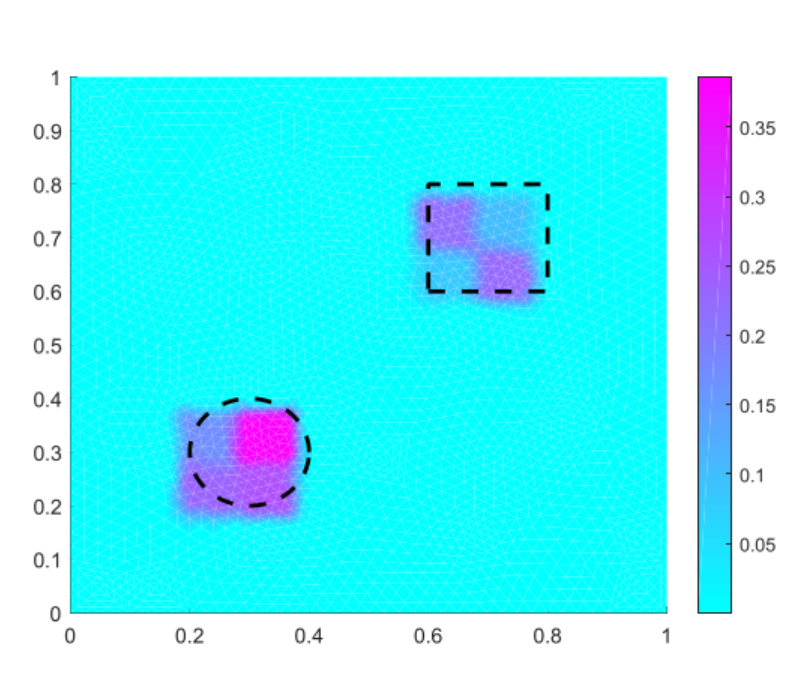}}
\end{center}
\caption{Reconstruction with the monotonicity-based regularization method in the case of noisy data, $\delta=10\%$.}\label{fig_mon_reg_noise}
\end{figure}

Figure~\ref{fig_mon_reg_nonoise} and \ref{fig_mon_reg_noise} show numerical results using the monotonicity-based regularization method for both noise-free and noisy data (with $\delta=0.1$). 
They clearly demonstrate the highly improved noise robustness for the monotonicity-based regularization method. 
The minimization was carried out using CVX, a package for specifying and solving convex programs \cite{GrBo07,cvx}. Note that, as it was already observed in \cite{harrach2016enhancing,harrach2018monotonicity}, the CVX package provides superior reconstructions compared to using Matlab's standard \texttt{quadprog} function. Moreover, in our numerical experiments, we have again observed that almost equally good results can be obtained with the simplified constrained $0\leq a_k\leq \bar c$, though a theoretical explanation for this is still missing.

\section{Shape recovery via  the Kohn-Vogelius functional}
In this section, we turn to an iterative strategy to recover the unknown shape $D$. 
To transforming the inverse problem \eqref{invp} into a minimization problem, we use a Kohn-Vogelius type functional to achieve shape recovery. 
This approach allows us to frame the problem as follows:
  \begin{equation}\label{min-J}
 \left\{\begin{aligned}
&\mbox{minimize } \J(D,u_{D},v_{D}):=\int_\Omega \sigma \vert \nabla(u_D-v_D) \vert^2\,dx \\
&\text{subject to } D\subset\mathcal{O}_{ad},  u_{D} \text{ and } v_{D} \text{ solutions of } \eqref{EIT}  \text{ and } \eqref{dir}, 
\end{aligned}\right.
\end{equation}
 where  $u_{D}$ is the solution of the Neumann problem \eqref{EIT}
and $v_{D}$  is the solution of the Dirichlet problem
\begin{equation}\label{dir}
\left\{
\begin{aligned}
   -\text{div}(\sigma \nabla v_{D}) & = 0 \quad \text{in }\Omega,
\\
  v_{D}&= f \quad \text{ on  }\partial \Omega,\\
\end{aligned}
\right.
\end{equation}
where $f\in H^{1/2}(\partial\Omega)$ is a measurement of the potential corresponding to the input flux $g$.

To solve numerically  the minimzation problem \eqref{min-J}, we need to compute the shape derivative of the Khon-Vogelius functional $\J$.

First, we recall some basic facts  about  the velocity method from shape optimization used  to calculate the shape derivatives of the functionals $\J$, see \cite{DZ2,SZ}. 
In the velocity (or speed) method a domain $\Omega$ is deformed by  the action of  a velocity field $V$. The evolution of the domain is described by the following  dynamical system:
\begin{equation}
 \label{speed}
  \left\{
 \begin{aligned}
 \frac{d}{dt}x(t)&=V(x(t)),  t\in [0,\varepsilon )\\
 x(0)&=X
\end{aligned}\right.
\end{equation} 
for some real number $\varepsilon>0$.   Assume $V \in \mathcal{D}^1(\Omega;\R^2)$ where  $ \mathcal{D}^1(\Omega;\R^2)$    denotes the space of continuously differentiable functions with compact support  in $\Omega$, then the ordinary differential equation \eqref{speed}  has a unique solution. 
This allows us to define the diffeomorphism
\[
\label{transf}
T_{t} :\R^2\rightarrow \R^2 : X\mapsto T_{t}(X):=x(t), 
\]
For $t\in [0,\varepsilon)$, $ T_t$ is invertible  satisfies  $T_t(\Omega)=\Omega$  but $T_t(D)\neq D$.
The Jacobian $\xi(t)$   defined by 
\[
\label{Jt}
\forall\; t\in[0,\varepsilon),\quad \xi(t)=  |\text{det}DT_{t}(X)|>0,
\]
where $DT_{t}(X)$ is the Jacobian matrix of  the transformation $T_{t}$ associated with the velocity field $V$, is  derivable with respect to $t$
and 
\begin{equation}
\xi^{\prime}(0)=  I_2- DV-DV^T.
\end{equation}

Let $J$ be a real valued function $J: \Omega \longrightarrow \R$. We say that $J$ has a {\it Eulerian semiderivative} at $\Omega$ in the direction $V$ if the following limit
exists and is finite:
\[
dJ(\Omega;V)= \lim_{t\searrow 0}\frac{J(T_{t}(\Omega)) - J(\Omega)}{t}.
\]
If the map $V\longrightarrow dJ(\Omega;V))$ is linear and continuous, we say that $J$ is shape differentiable at $\Omega$.

Now,  we  state the shape derivative of the functional  $\J$ with respect to the shape $D$. We introduce the reduced functional
$J(D):= \J(D, u_D,v_D)$.
\begin{theorem}\label{shape_deriv}
The functional  $J$ is  shape  differentiable with  respect to $D$ and its derivative in the direction  $V$ is given by 
\begin{equation}\label{shape_der}
dJ(D;V)=\int_\Omega \left( \xi^{\prime}(0)\nabla v_D\cdot \nabla v_D -\xi^{\prime}(0)\nabla u_D\cdot \nabla u_D\right)\,dx. 
\end{equation}
\end{theorem}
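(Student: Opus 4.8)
The plan is to exploit the special algebraic structure of the Kohn--Vogelius functional in order to reduce the shape differentiation of $J$ to that of two decoupled energies, and only then to apply the velocity method. Expanding the square,
\[
J(D) = \int_\Omega \sigma|\nabla u_D|^2\,dx - 2\int_\Omega \sigma\,\nabla u_D\cdot\nabla v_D\,dx + \int_\Omega \sigma|\nabla v_D|^2\,dx .
\]
The cross term is the decisive point: integrating by parts and using $\nabla\cdot(\sigma\nabla u_D)=0$ together with the boundary conditions $v_D=f$ and $\sigma\partial_\nu u_D=g$ on $\partial\Omega$, one finds $\int_\Omega \sigma\,\nabla u_D\cdot\nabla v_D\,dx=\int_{\partial\Omega} fg\,ds$, which is \emph{independent of $D$}. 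Hence $J(D)=E_N(D)+E_D(D)-2\int_{\partial\Omega}fg\,ds$ with the Neumann and Dirichlet energies $E_N(D)=\int_\Omega\sigma|\nabla u_D|^2$ and $E_D(D)=\int_\Omega\sigma|\nabla v_D|^2$. The constant differentiates to zero, and this decoupling is exactly what will make all state-derivative (adjoint) contributions cancel.

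Next I would set up the velocity method on the fixed domain. Put $D_t=T_t(D)$, $\sigma_t=\sigma_0+(\sigma_1-\sigma_0)\chi_{D_t}$, and pull the states back, $\hat u_t=u_{D_t}\circ T_t$, $\hat v_t=v_{D_t}\circ T_t$. Since $V\in\mathcal D^1(\Omega;\R^2)$ has compact support, $T_t=\mathrm{id}$ near $\partial\Omega$, so the data $g,f$ are preserved and $\sigma_t\circ T_t=\sigma$. Changing variables $x=T_t(X)$ via $\nabla_x=(DT_t)^{-T}\nabla_X$ and $dx=\xi(t)\,dX$, the transported weak formulations of \eqref{EIT_f} and \eqref{dir} become
\[
\int_\Omega \sigma\,A(t)\nabla\hat u_t\cdot\nabla\hat w\,dX=\int_{\partial\Omega}g\,\hat w\,ds,\qquad
\int_\Omega \sigma\,A(t)\nabla\hat v_t\cdot\nabla\hat\phi\,dX=0,
\]
for all $\hat w\in H^1_{\Diamond}(\Omega)$, $\hat\phi\in H^1_0(\Omega)$, with $\hat v_t=f$ on $\partial\Omega$, where $A(t):=\xi(t)(DT_t)^{-1}(DT_t)^{-T}$ satisfies $A(0)=I_2$ and $A'(0)=\xi'(0)$ as in \eqref{shape_der}. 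Because $t\mapsto A(t)$ is smooth and the bilinear forms remain uniformly coercive for small $t$, a parameter-dependent Lax--Milgram argument (cf.\ \cite{DZ2,SZ}) gives differentiability of $t\mapsto\hat u_t,\hat v_t$ in $H^1(\Omega)$, with material derivatives $\dot u,\dot v$.

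I would then differentiate each energy separately. For $E_N$, testing the transported Neumann form with $\hat u_t$ gives $E_N(D_t)=\int_{\partial\Omega}g\,\hat u_t\,ds$, so $dE_N=\int_{\partial\Omega}g\,\dot u\,ds$. Differentiating the weak form and testing with $u_D$ yields $\int_\Omega\sigma\,\nabla\dot u\cdot\nabla u_D=-\int_\Omega\sigma\,\xi'(0)\nabla u_D\cdot\nabla u_D$, while the form at $t=0$ with $w=\dot u\in H^1_\Diamond(\Omega)$ gives $\int_{\partial\Omega}g\,\dot u=\int_\Omega\sigma\,\nabla u_D\cdot\nabla\dot u$; combining, $dE_N(D;V)=-\int_\Omega\sigma\,\xi'(0)\nabla u_D\cdot\nabla u_D\,dX$. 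For $E_D$ the datum $f$ is frozen, so $\dot v\in H^1_0(\Omega)$ and the Dirichlet form with $\phi=\dot v$ kills the material term, leaving $dE_D(D;V)=+\int_\Omega\sigma\,\xi'(0)\nabla v_D\cdot\nabla v_D\,dX$. Adding these produces \eqref{shape_der} (the conductivity $\sigma$ appearing as the natural weight in the integrand), and linearity and continuity of $V\mapsto dJ(D;V)$ follow from the explicit linear dependence of $\xi'(0)$ on $DV$, giving shape differentiability.

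The conceptual heart is the cancellation of the material derivatives, secured by the decoupling of the first paragraph together with testing each linearized equation against the state itself. The genuine technical obstacle I expect is the \emph{rigorous} differentiability of $t\mapsto\hat u_t,\hat v_t$ in $H^1(\Omega)$ and the justification of differentiation under the integral sign; this is where the smoothness of $t\mapsto A(t)$ and uniform coercivity must be invoked with care, in particular because $\sigma$ is only piecewise constant across the moving interface $\partial D$.
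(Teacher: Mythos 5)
Your argument is sound and is, in essence, the standard Lagrangian (pull-back/material derivative) proof that the cited references \cite{belhachmi2023level,laurain2016shape} carry out; the paper itself gives no proof beyond this citation, so your write-up is genuinely more self-contained. The decoupling step is a nice simplification: observing that the cross term $\int_\Omega\sigma\nabla u_D\cdot\nabla v_D\,dx=\int_{\partial\Omega}fg\,ds$ is shape-independent (test the weak form \eqref{EIT_f} with $v_D$ minus its boundary mean) lets you avoid any adjoint state, and the subsequent computations — $dE_N(D;V)=-\int_\Omega\sigma\,A'(0)\nabla u_D\cdot\nabla u_D\,dX$ and $dE_D(D;V)=+\int_\Omega\sigma\,A'(0)\nabla v_D\cdot\nabla v_D\,dX$, with $A(t)=\xi(t)(DT_t)^{-1}(DT_t)^{-T}$ — are correct, including the facts that $\dot u$ has vanishing boundary mean and $\dot v\in H^1_0(\Omega)$ because $T_t=\mathrm{id}$ near $\partial\Omega$. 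One worry you raise is actually not an obstacle: after pulling back to the fixed domain the coefficient is $\sigma A(t)$ with $\sigma$ frozen (since $\sigma_t\circ T_t=\sigma$), so the piecewise-constant conductivity causes no difficulty for the existence of the material derivatives; it would only matter if you tried to work with the local shape derivative $u'$ across the moving interface.

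The one point you should not leave in a parenthesis is the mismatch with the printed statement: your (correct) computation yields
\begin{equation*}
dJ(D;V)=\int_\Omega \sigma\,A'(0)\bigl(\nabla v_D\cdot\nabla v_D-\nabla u_D\cdot\nabla u_D\bigr)\,dX,
\qquad A'(0)=(\mathrm{div}\,V)\,I_2-DV-DV^T,
\end{equation*}
whereas \eqref{shape_der} as printed carries no factor $\sigma$ and the paper defines $\xi'(0)=I_2-DV-DV^T$ without the divergence term. Since the functional in \eqref{min-J} contains the weight $\sigma$, the $\sigma$-weighted formula is the correct one (this is also consistent with the sign and structure of the Fr\'echet derivative \eqref{Frechet_var_form} and with the distributed shape-derivative formulas in the cited references); so your proof establishes a corrected version of the statement rather than the literal formula, and you should say so explicitly instead of asserting that the sum "produces \eqref{shape_der}".
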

\begin{proof}
The proof of Theorem \ref{shape_deriv}  can be found in \cite{belhachmi2023level,laurain2016shape}.
\end{proof}
\subsection{Algorithm and numerical results}\label{numerics}
In this subsection, we use the same conductivity values as in Section 3 and apply specific boundary fluxes,
 which are derived from the data in \eqref{data} and presented as follows:
\begin{align*}
g_1&= \frac{g_k(0, y) - g_k(1, y)}{\sin(k\pi y)} - \frac{g_k(x, 0) +g_k(x, 1)}{\cos(k\pi x)},\\
g_2 &= \frac{g_k(0, y) - g_k(1, y)}{\sin(k\pi y)} - \frac{g_k(x, 1) - g_k(x, 0)}{\cos(k\pi x)},\\
g_3 &= \frac{g_k(0, y) - g_k(1, y)}{\sin(k\pi y)} + \frac{g_k(x, 0) - g_k(x, 1)}{\cos(k\pi x)},\\
g_4& = \frac{g_k(x, 0)}{\cos(k\pi x)} - \frac{g_k(1, y)}{\sin(k\pi y)},\\
g_5 &= \frac{g_k(1, y)}{\sin(k\pi y)} - \frac{g_k(x, 1)}{\cos(k\pi x)}.
\end{align*}
To simulate  noisy  data, the measurements $f_k$   ($f_k=u_k\vert_{\partial \Omega}$  where $u_k$ is solution of the direct problem with
 $g=g_k,  k=1,\ldots,3$) are corrupted by adding a normal Gaussian noise with mean zero and standard deviation $\eta*\|f_k\|_\infty$, where $\eta$ is a parameter.

 For the numerical implementation,  we use the software package FEniCS; see \cite{fenics:book}. 
  The avoid the so-called  inverse crime problem, the domain $\Omega$ is meshed using crossed  grid of $128\times 128$ elements to compute the measurements $f$.
Then we use  a crossed grid  of  $64\times 64$ to solve the minimization problem. 

The evolution of the shape $D$ is modeled using the level set method originally introduced in \cite{MR965860}, which gives a general
 framework for the computation of evolving interfaces using an implicit representation of these interfaces.  
The core idea of this method is to represent $\Gamma_t$ as the level set of a continuous $\phi: \Omega\times \R^+ \to \R$, the so-called level set function:
\begin{equation}
D_t = \{x\in \Omega,\ \phi(x,t) < 0\}.
\end{equation}
Let $x(t)$ be the position of a particle on $D_t$ moving with velocity $\dot{x}(t)=V(x(t))$ according to \eqref{speed}.
Differentiating the relation $\phi(x(t),t)=0$ with respect to $t$
yields the Hamilton-Jacobi equation:
\begin{equation}
\partial_t\phi + V\cdot \nabla \phi = 0  \quad \mbox{ in } D_t\times\R^+,
\label{eq:transport}
\end{equation}
which can be extended to $\Omega\times\R^+$.
For the discretization of \eqref{eq:transport} we use the Local Lax-Friedrichs flux from \cite{MR1111446} and a forward Euler time discretization.
The shape gradients of $J$  is computed in the $H^1$-norm, i.e. we solve for instance the equation 
\begin{equation}\label{smoothing_num}
\mbox{ Find }V\in H^1_0(\Omega) : \int_\Omega D V : D W = - dJ(D; W)\mbox{ for all } W\in H^1_0(\Omega) 
\end{equation}
where $dJ(D; W)$ is given by \eqref{shape_der}.\\
The algorithm proceeds through the following steps
 \begin{itemize}
 \item[1.]  Initialize the level-set function $\phi_0$ based on the initial guess $D_0$. 
  \item[2.]   Repat  until convergence, for $k\geq 0$:
   \begin{itemize}
  \item[a.] Compute the solutions $u_k$  of \eqref{EIT},  associated  with  the shape
  \[D_k=\left\{x\in \Omega:  \phi_k(x) <0\right\}\].
     \item[b.]  Deform the shape $D_k$   by transporting  o the level set function
$\phi_{k+1}(x)=\phi(x,\Delta t_k)$,  where $\phi(x,t)$  is   solution of  \eqref{eq:transport}
with velocity $V$  given by  \eqref{smoothing_num}   and initial condition 
 $\phi(x,0)=   \phi_k(x)$.
   The time step is selected so that $J(D_{k+1})< J(D_{k})$. 
   \end{itemize}
 \end{itemize}
 \begin{figure}[H]
\begin{center}
\subfloat{\includegraphics[scale
=0.27]{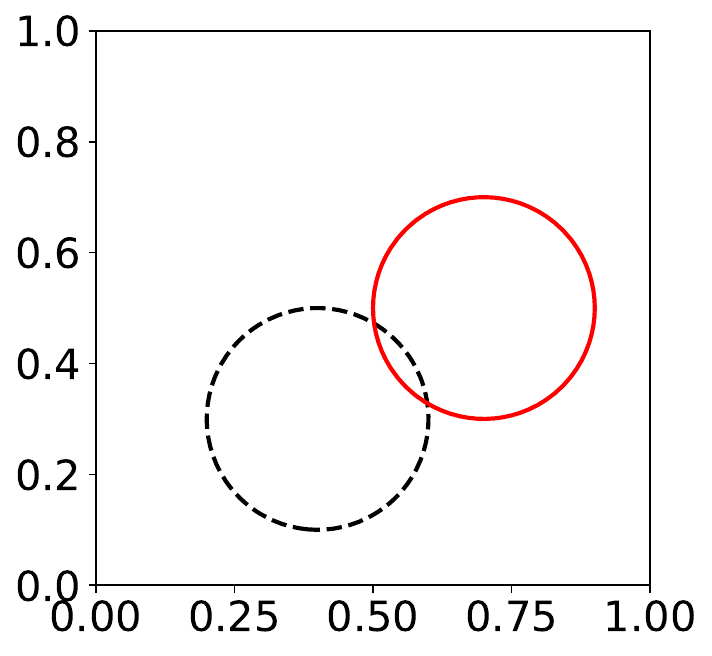}}\hfill
\subfloat{\includegraphics[scale=0.27]{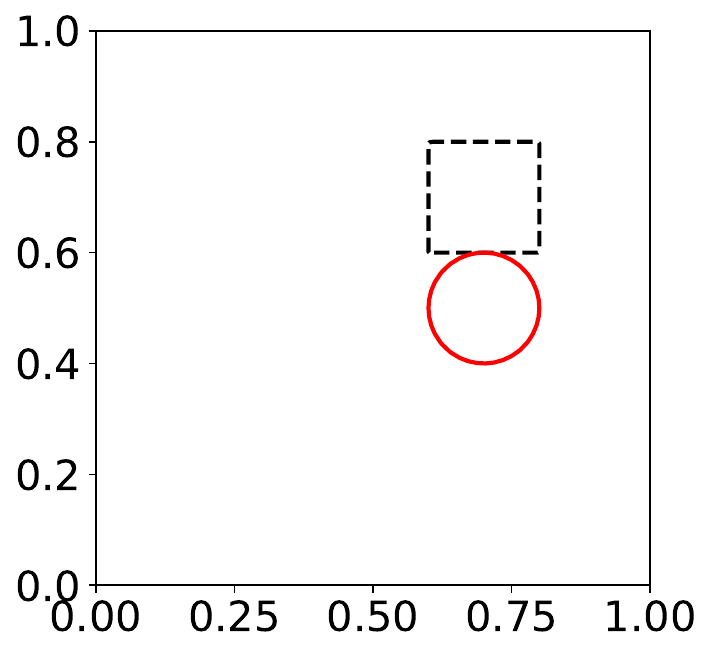}}\hfill
\subfloat{\includegraphics[scale=0.27]{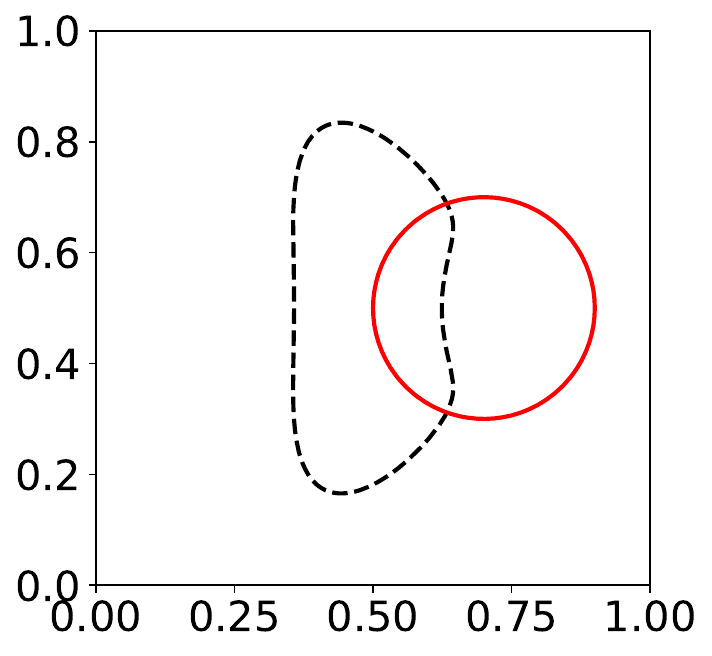}}\hfill
\subfloat{\includegraphics[scale=0.27]{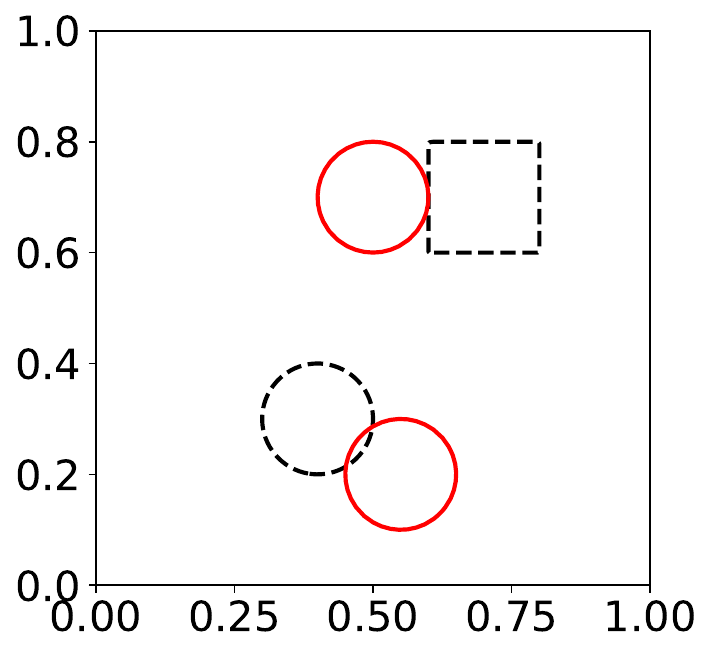}}\\
\subfloat{\includegraphics[scale=0.27]{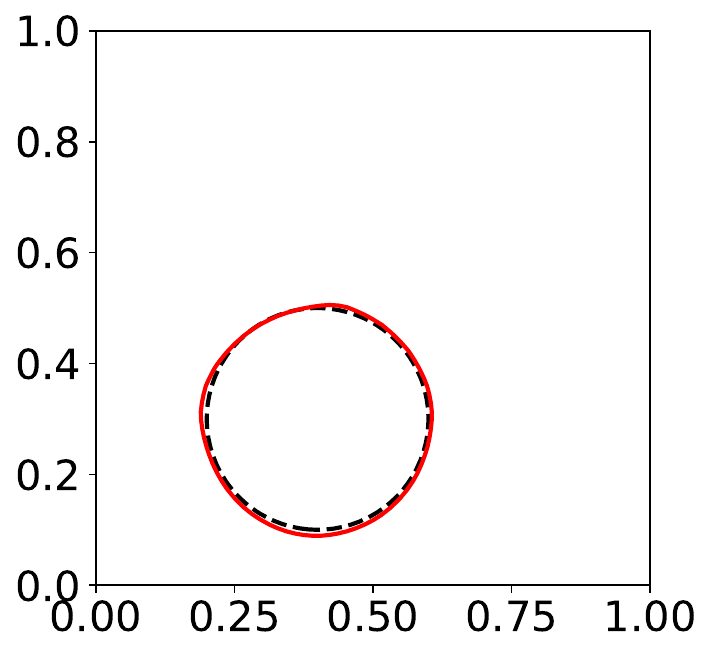}}\hfill
\subfloat{\includegraphics[scale=0.27]{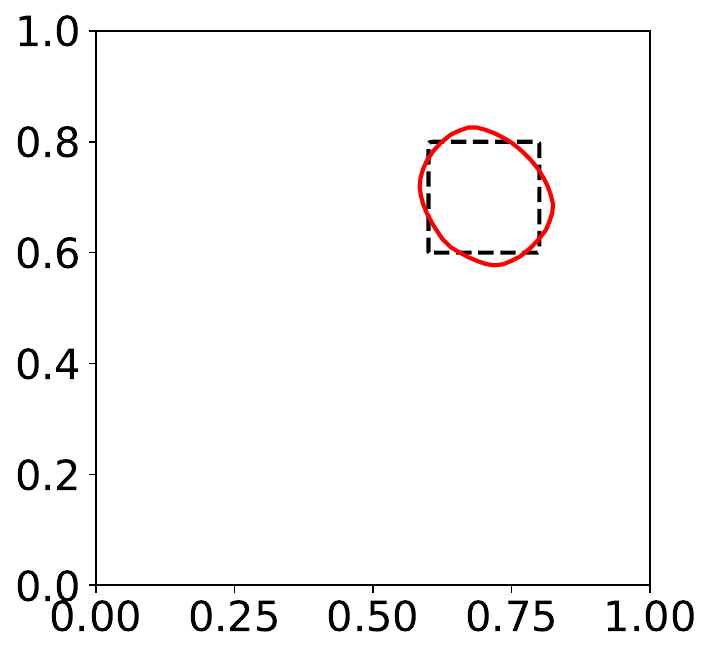}}\hfill
\subfloat{\includegraphics[scale=0.27]{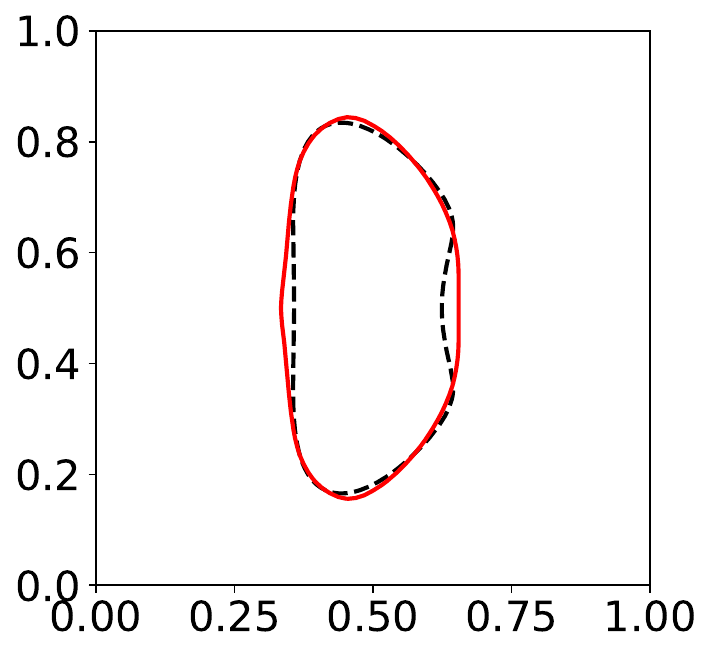}}\hfill
\subfloat{\includegraphics[scale=0.27]{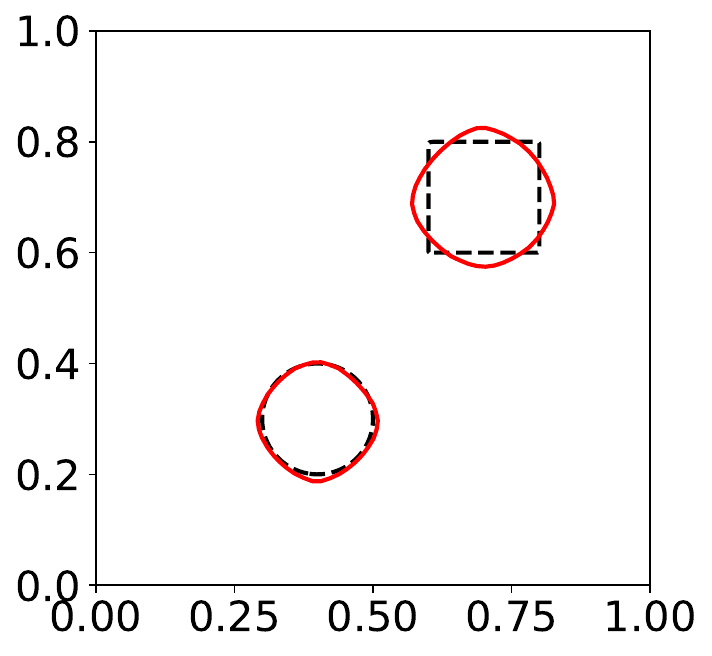}}\\
\subfloat{\includegraphics[scale=0.27]{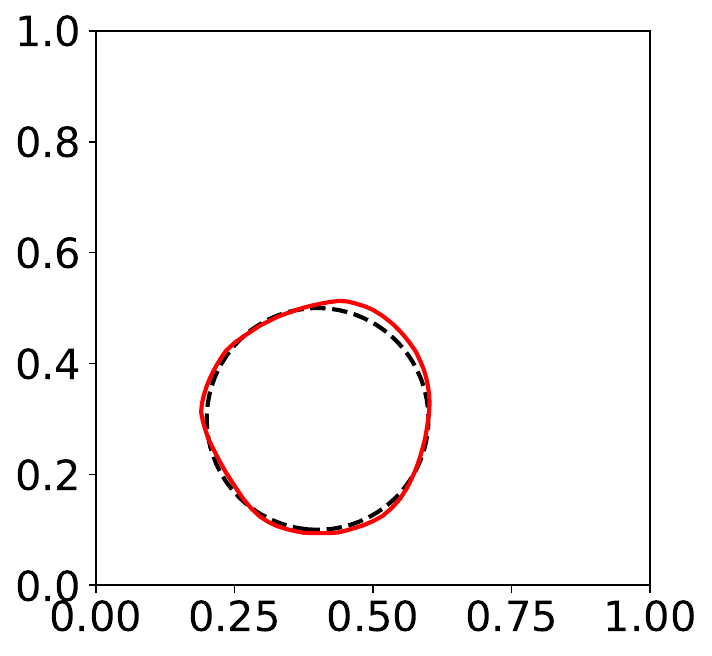}}\hfill
\subfloat{\includegraphics[scale=0.27]{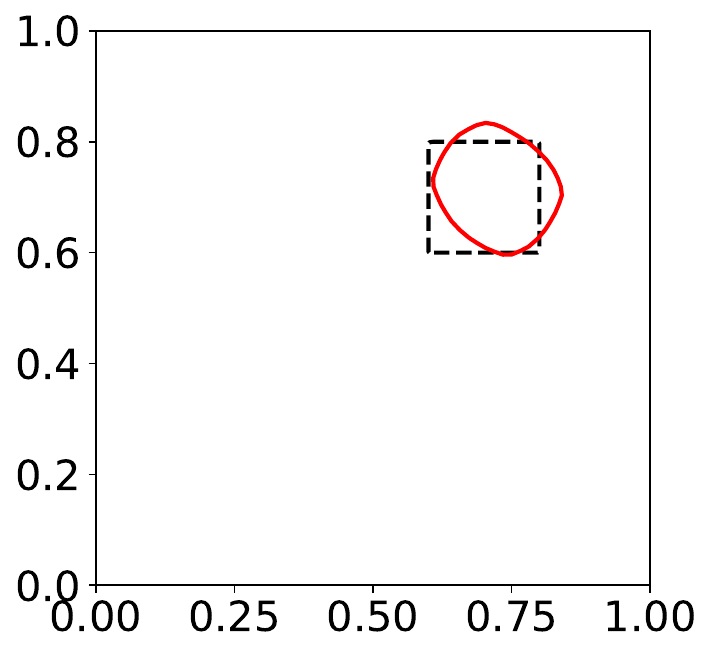}}\hfill
\subfloat{\includegraphics[scale=0.27]{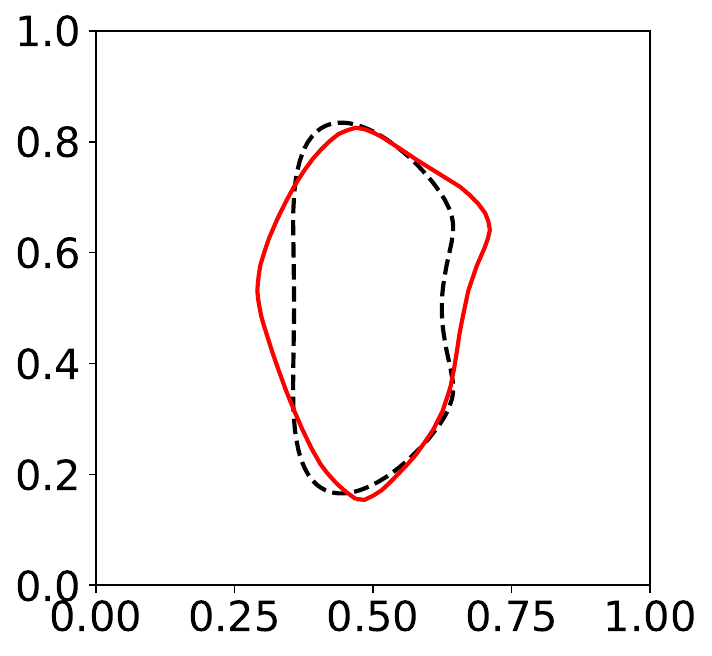}}\hfill
\subfloat{\includegraphics[scale=0.27]{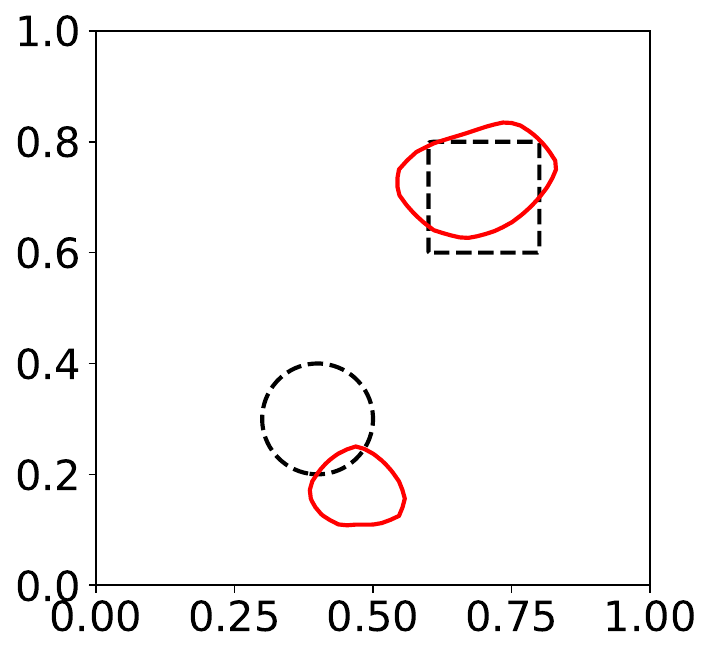}}\\
\subfloat{\includegraphics[scale=0.27]{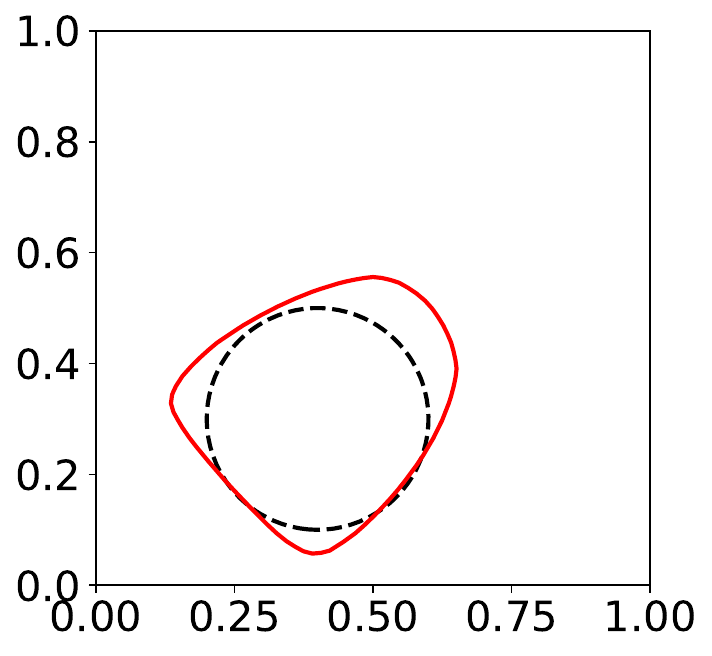}}\hfill
\subfloat{\includegraphics[scale=0.27]{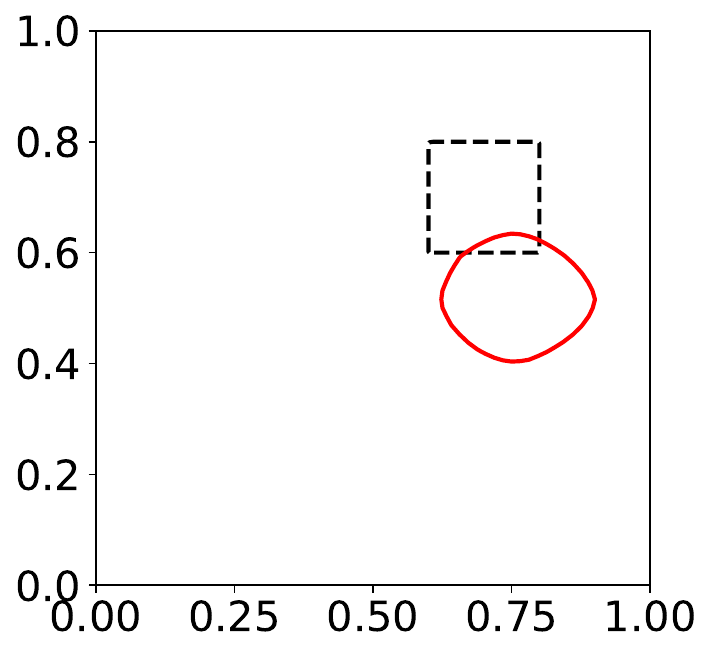}}\hfill
\subfloat{\includegraphics[scale=0.27]{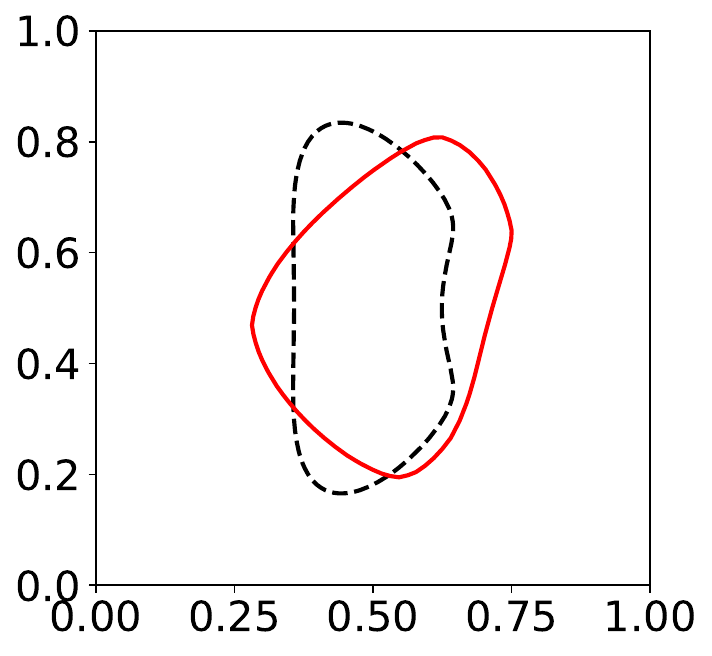}}\hfill
\subfloat{\includegraphics[scale=0.27]{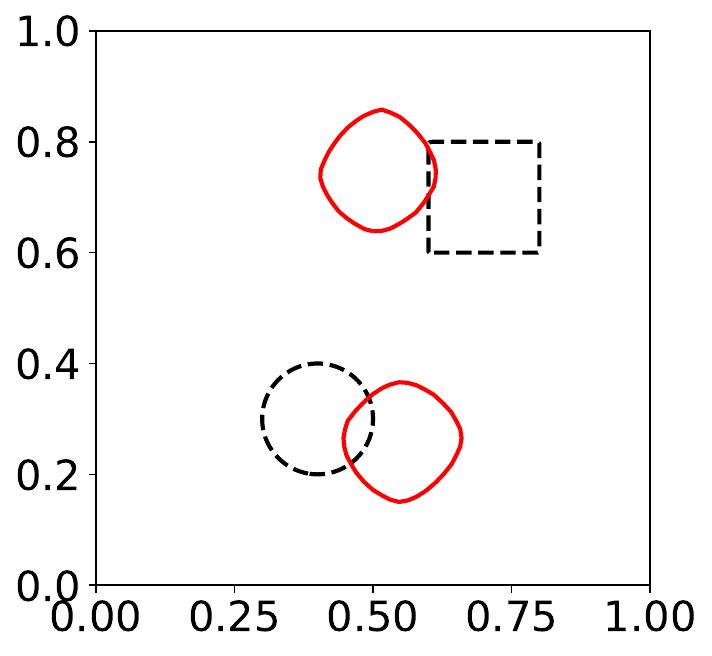}}
\end{center}
\caption{Reconstruction  with the levelset  method. First row: initialization (solid contours) and true inclusion (dashed contour).
 Second row: reconstruction with free noise  (solid contours) and true inclusion (dashed contours). Convergence occurs in 70, 79, 87 and 60 iterations (from left to right).
 Third row: reconstruction with  noise level $\eta =0.01$  (solid contours) and true inclusion (dashed contours). Convergence occurs in 78, 82, 125 and 110 iterations (from left to right).
 Fourth row: reconstruction with noise level   $\eta =0.03$ (solid contours) and true inclusion (dashed contours). Convergence occurs in 105, 90, 175 and 120 iterations (from left to right).
}
\label{fig_levelset}
\end{figure}
\noindent
Figure \ref{square_bis} presents a numerical example demonstrating the failure of the level set method to converge with an alternative
 initialization, in contrast to the initialization  given  by the monotonicity method (see, second column of Figure \ref{fig_combine}). This motivates us to employ the 
monotonicity method  to achieve a robust initialization and address the challenges posed by improper initialization
 in the level set method.
\begin{figure}[H]
\begin{center}
\subfloat{\includegraphics[scale=0.45]{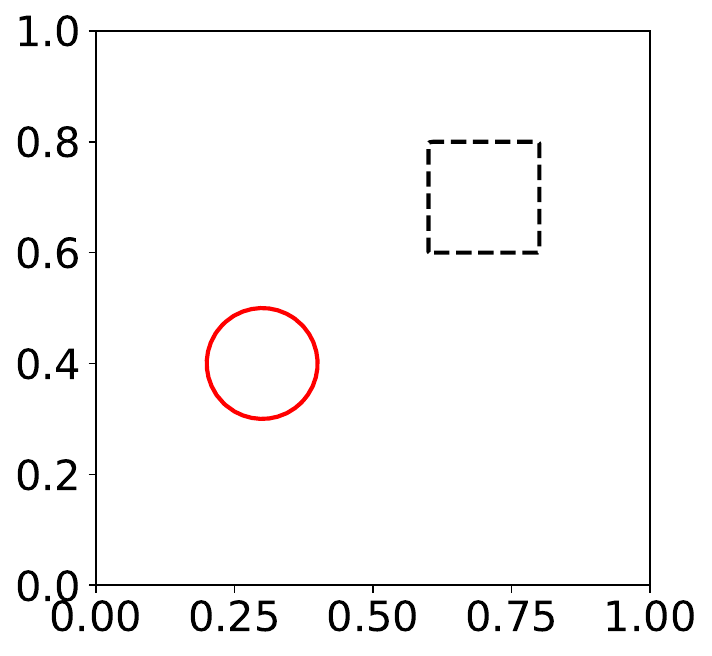}}
\subfloat{\includegraphics[scale=0.45]{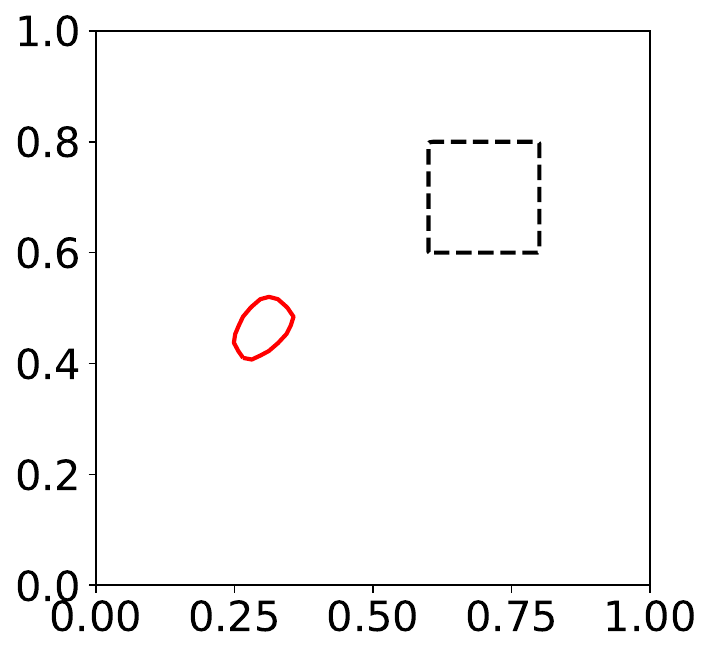}}
\end{center}
\caption{ Reconstruction  with the levelset  method: initialization (solid contours) and true inclusion (dashed contour). 
Right figure: reconstruction with  noise level $\eta =0.01$. The algorithm stop afer 72 iterations.}\label{square_bis}
\end{figure}
\begin{remark}
We emphasized that, for shape recovery using the Kohn-Vogelius process, the measurements \(\langle \Lambda(\sigma) - \Lambda(\sigma_0) g_i, g_j \rangle\), required in the regularized monotonicity method, are not essential. Instead, partial measurements of the form \((\Lambda(\sigma) g_i)\) are sufficient. To improve the computational efficiency of the level set method, a carefully selected subset of boundary data was extracted from the dataset specified in \eqref{data}.
\end{remark}
\section{Combined   monotonicty and level-set method  for  shape reconstruction}
In this section, we present numerical results using a combination of   monotonicity and level set methods. More precisely,
 we begin by applying the monotonicity method to obtain an initial approximation of the solution. We then refine this approximation
 using the level set method for improved resolution.  

Since the regularized monotonicity method provides a better approximation compared to the linearized monotonicity method,
 especially in the presence of noisy data, we use the results shown in Figure \ref{fig_mon_reg_noise} as the initialization for the numerical results 
in Figure \ref{fig_combine}. For simplicity, we approximate the results in Figure \ref{fig_mon_reg_noise} using basic geometries, such as circles or ellipses.
 \begin{figure}[H]
\begin{center}

\subfloat{\includegraphics[scale=0.27]{results_cvx/circle_cvx_box01}}\hfill
\subfloat{\includegraphics[scale=0.27]{results_cvx/square_cvx_box01}}\hfill
\subfloat{\includegraphics[scale=0.27]{results_cvx/concave_cvx_box01}}\hfill
\subfloat{\includegraphics[scale=0.27]{results_cvx/circle_square_cvx_box01}}\\
\subfloat{\includegraphics[scale=0.27]{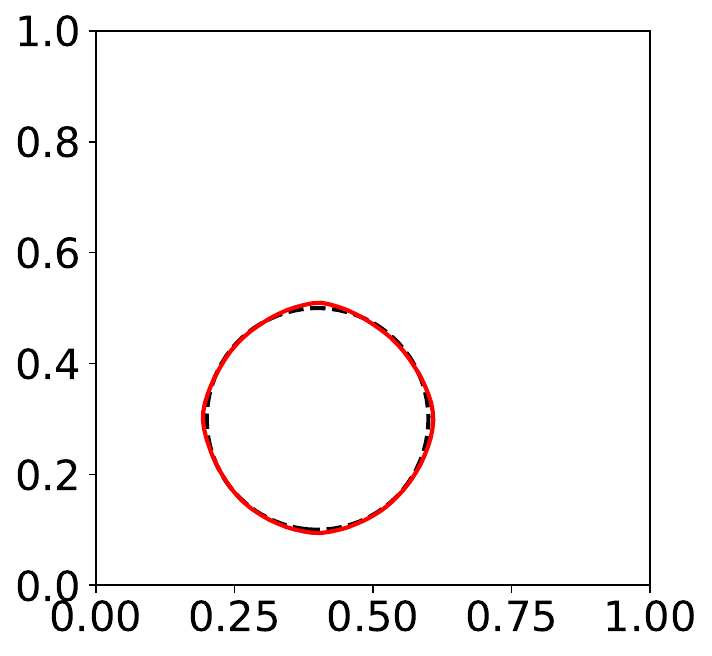}}\hfill
\subfloat{\includegraphics[scale=0.27]{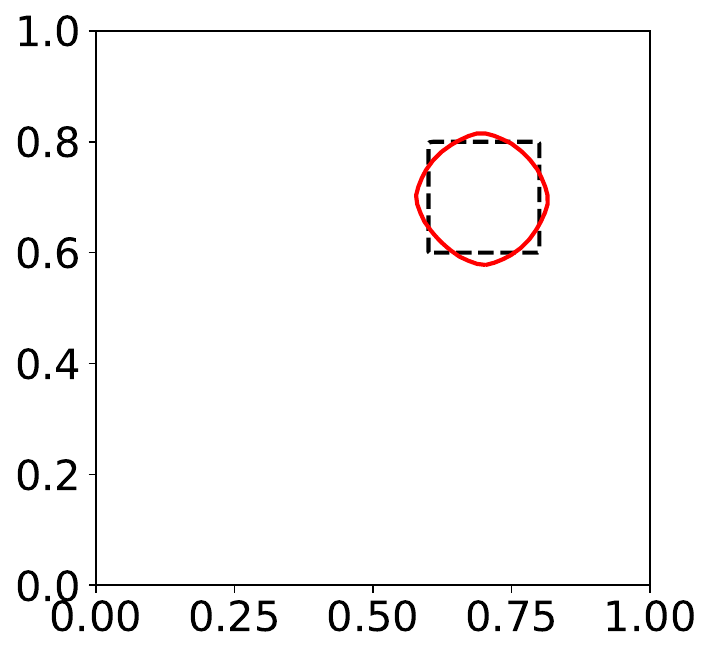}}\hfill
\subfloat{\includegraphics[scale=0.27]{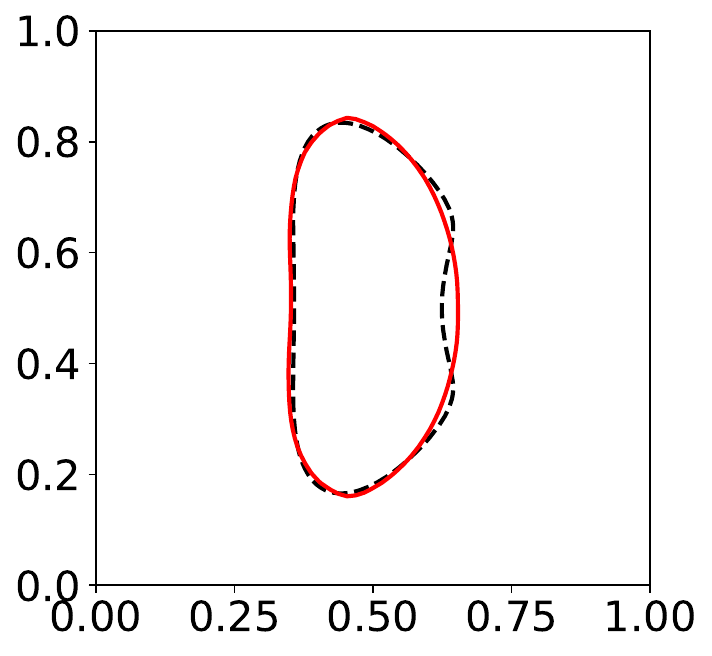}}\hfill
\subfloat{\includegraphics[scale=0.27]{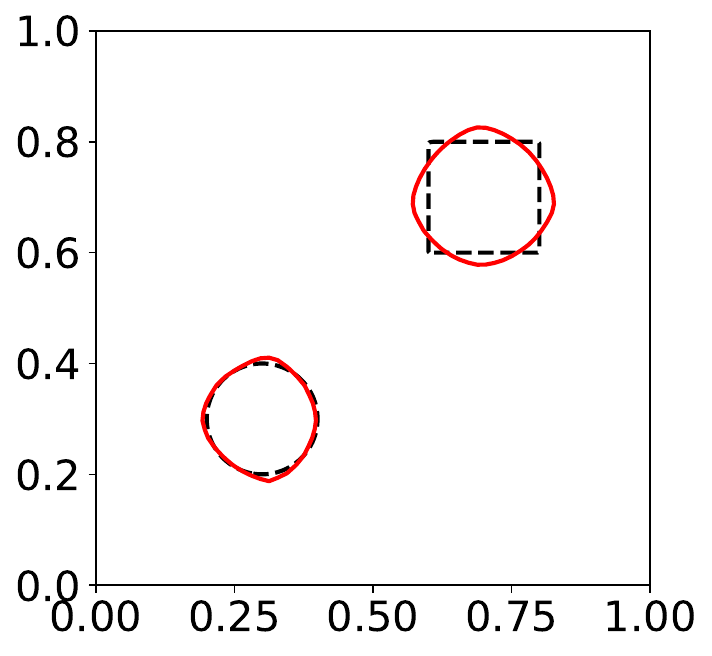}}\\
\subfloat{\includegraphics[scale=0.27]{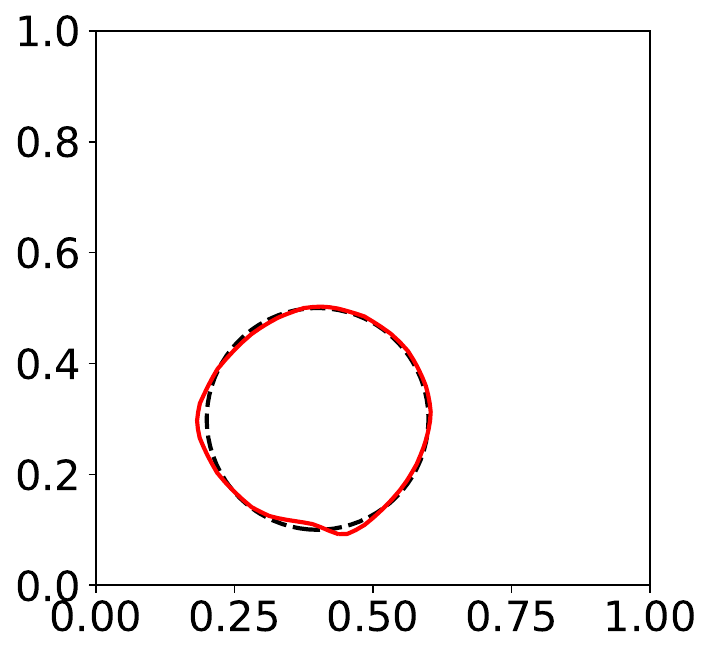}}\hfill
\subfloat{\includegraphics[scale=0.27]{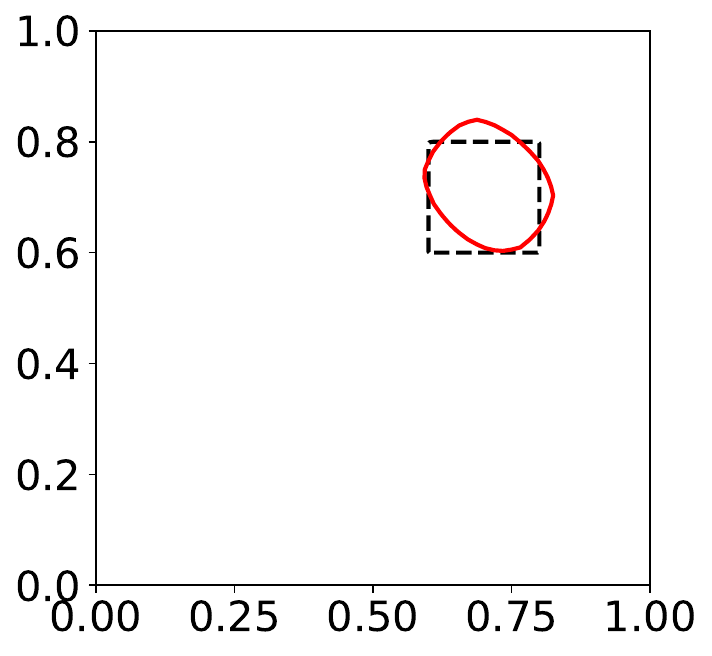}}\hfill
\subfloat{\includegraphics[scale=0.27]{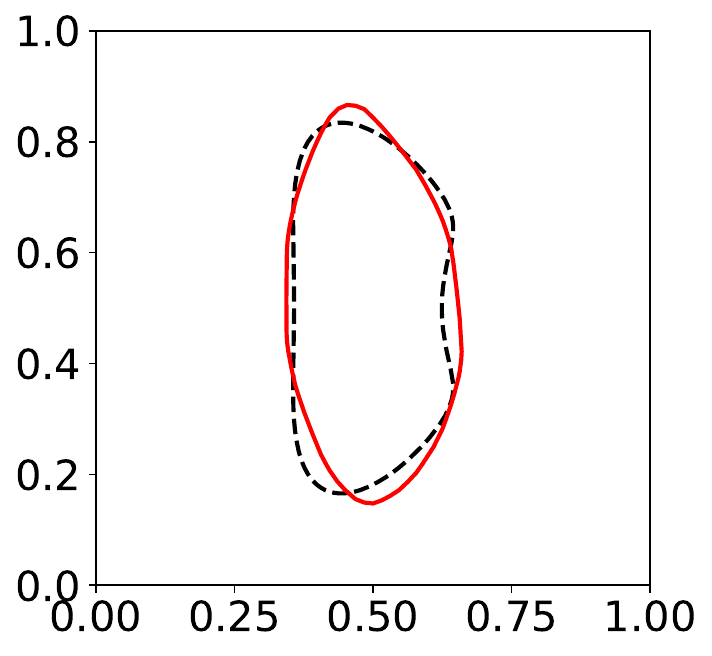}}\hfill
\subfloat{\includegraphics[scale=0.27]{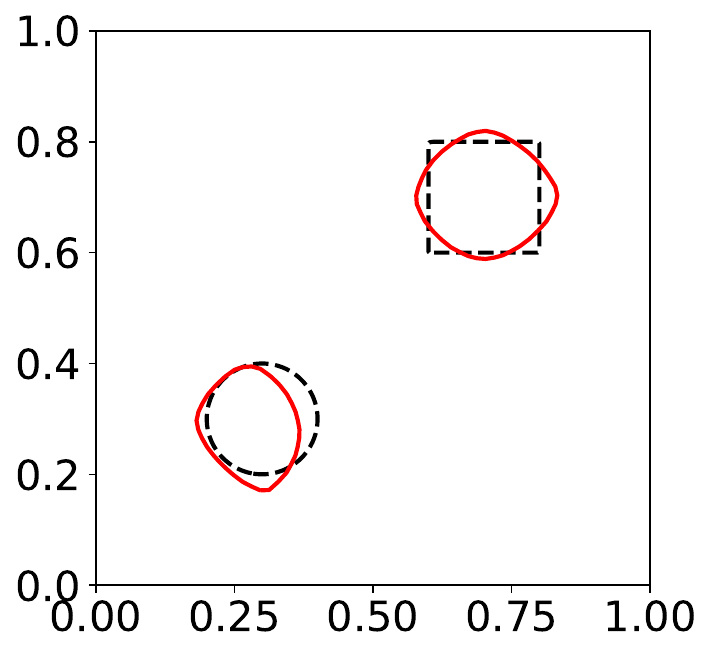}}\\
\subfloat{\includegraphics[scale=0.27]{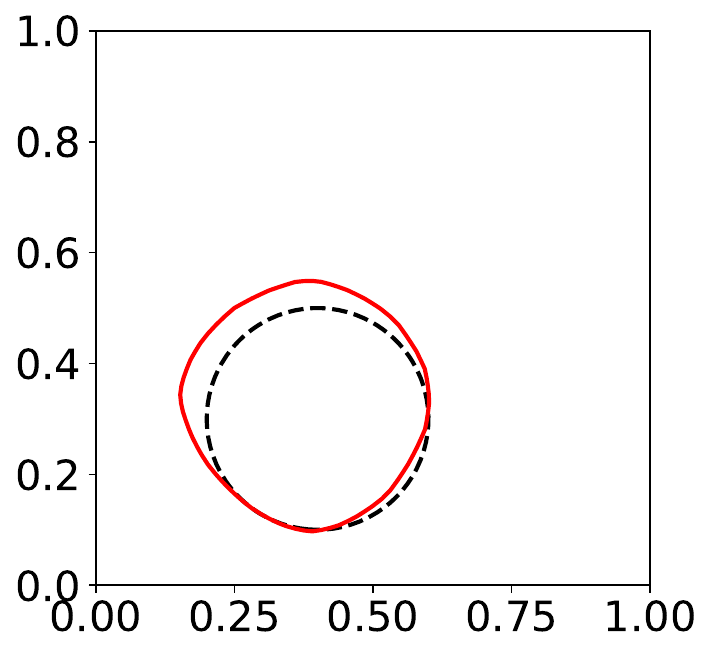}}\hfill
\subfloat{\includegraphics[scale=0.27]{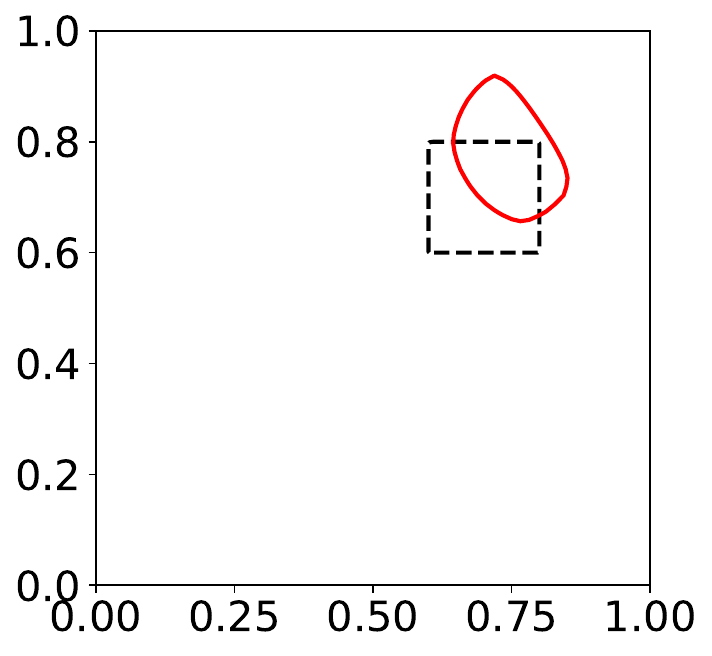}}\hfill
\subfloat{\includegraphics[scale=0.27]{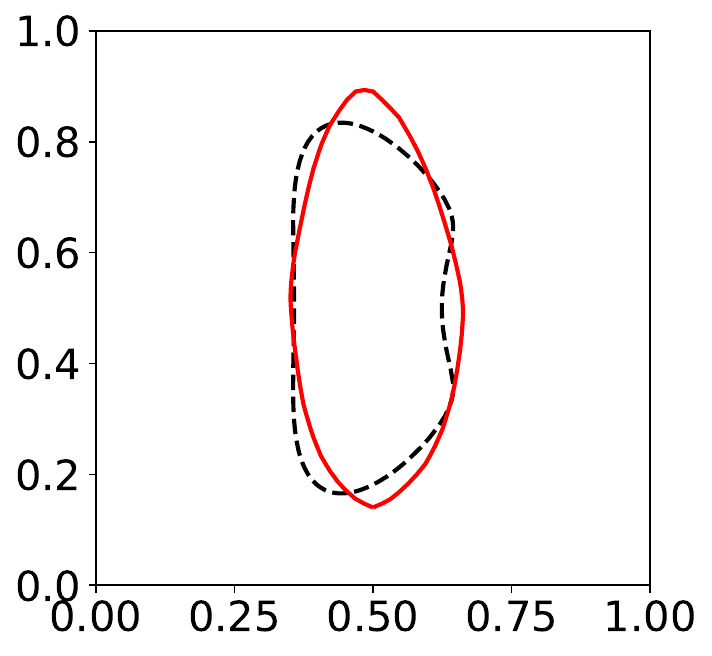}}\hfill
\subfloat{\includegraphics[scale=0.27]{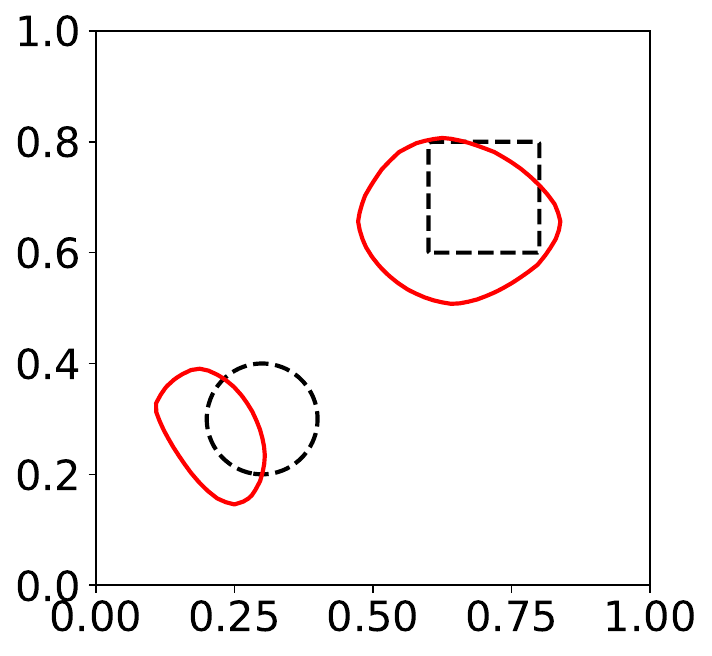}}
\end{center}
\caption{Reconstruction  with the combined monotonicity and levelset  methods. First row: reconstruction with the regularized monotonicity method with
$0\leq a_k\leq \min(\bar c, c_k)$ and $\delta=0.1$ (which is the initialization of the level set method).   The  true inclusion  is the dashed contour.
 Second row:  finale iterate of the level set method in the case of  free noise. Convergence occurs in 40, 60, 48 and 49 iterations (from left to right).
 Third row:  finale iterate of the level set method in the case of  noise level $\eta =0.01$. Convergence occurs in 65, 67, 72 and 62 iterations (from left to right).
 Fourth row:  finale iterate of the level set method in the case  noise level   $\eta =0.03$. Convergence occurs in 70, 75, 89 and 69 iterations (from left to right)}
\label{fig_combine}
\end{figure}

Figure \ref{fig_combine} illustrates the reconstruction achieved through the combined methods. It is evident that this approach yields a superior approximation 
of the solution when compared to using the monotonicity or level set methods individually. Moreover, the reconstruction demonstrates rapid convergence and stability, 
even with varying levels of noise in the data.

By employing monotonicity-based regularization as an initial guess, the convergence rate is enhanced, allowing the combined method to outperform the classical approach in 
terms of speed.

\subsection{Numerical   results for shape and parameter reconstruction}
In this subsection, we present numerical results for the simultaneous reconstruction of the conductivity   $\sigma_1$ and the shape $D$
with five measuremenst $g_k, k=1\ldots 5$ and initial guess  genetated  by the regularized monotonicity method. 

 The algorithm follows a binary search approach: first, the shape $D$ is updated using the level set method, and then, during each iteration,
 the conductivity  $\sigma_1$  is refined using the Newton method by minimizing the convex  functional
\begin{equation}
\varphi(\sigma_1):=\int_\Omega(\sigma_0+(\sigma_1-\sigma_1)\chi_D)\nabla \vert u^g\vert^2\,dx-\int_{\partial\Omega}g f\,ds.
\end{equation}  
Here,  $u^g$  is the solution of the direct problem \eqref{EIT} corresponding to the flux 
$g$, and  $f$represents the boundary trace of the solution 
  to the direct problem \eqref{EIT}, where   $\sigma_1$  and $D$  
are the true parameters.

 \begin{figure}[H]
\begin{center}
\subfloat{\includegraphics[scale=0.35]{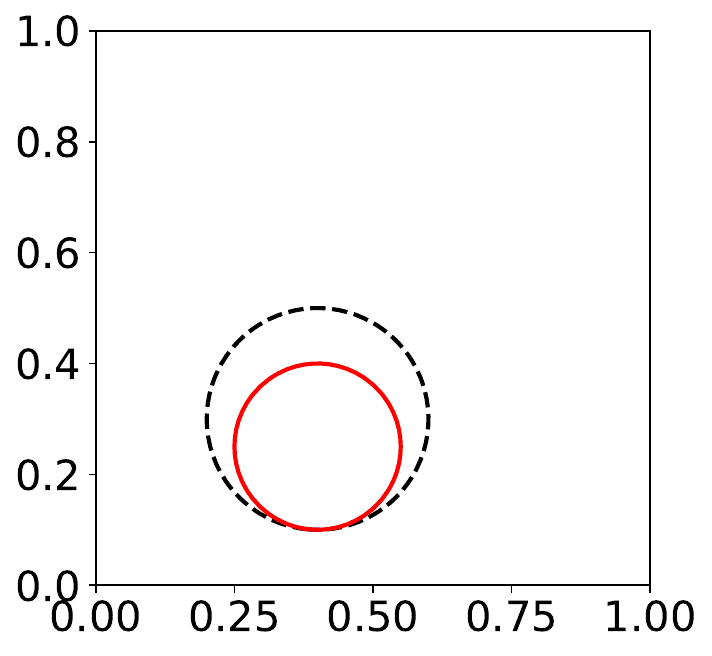}}
\subfloat{\includegraphics[scale=0.35]{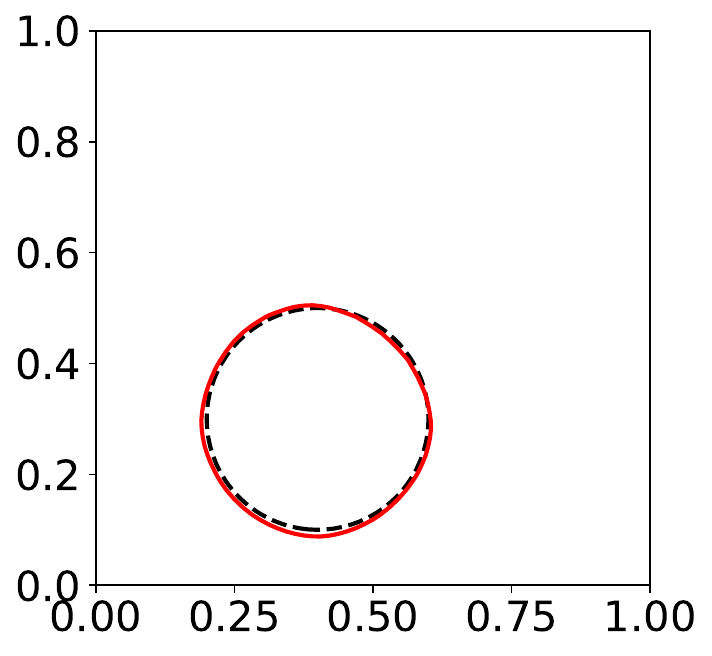}}
\subfloat{\includegraphics[scale=0.35]{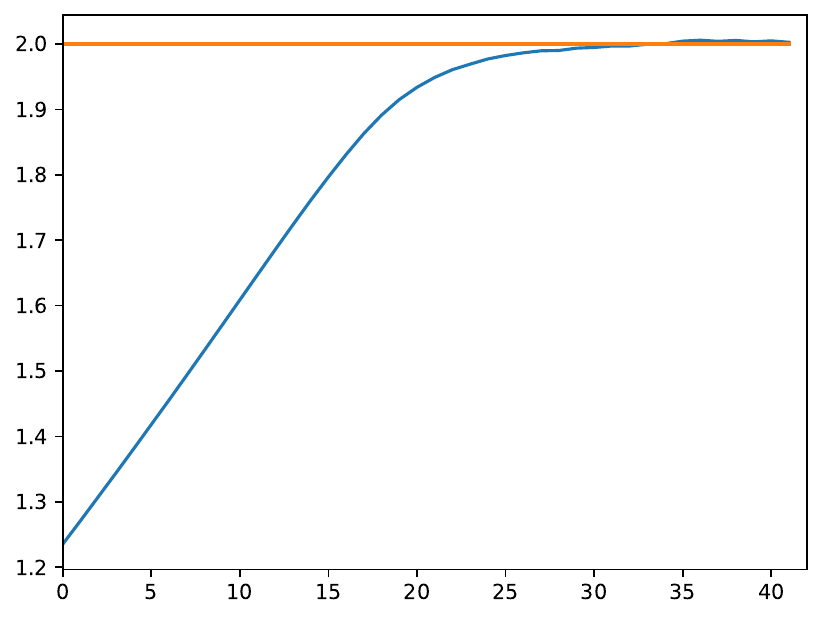}}\\
\subfloat{\includegraphics[scale=0.35]{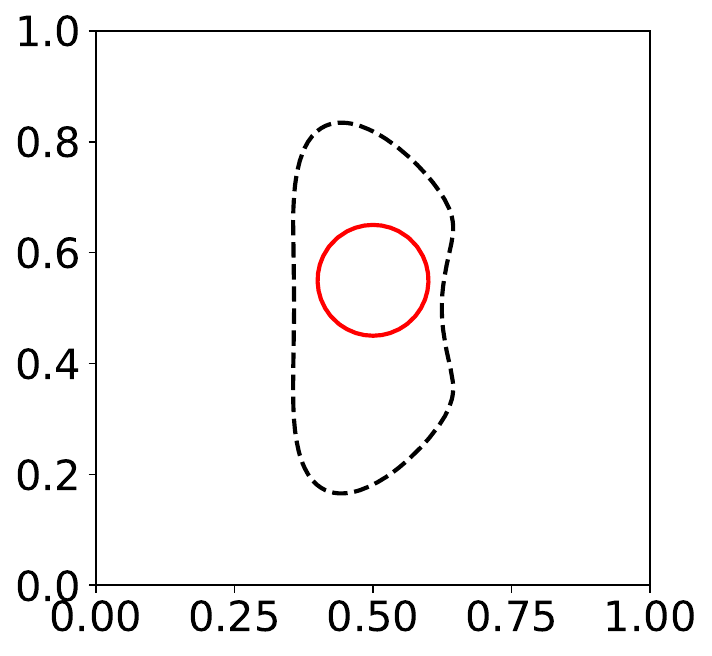}}
\subfloat{\includegraphics[scale=0.35]{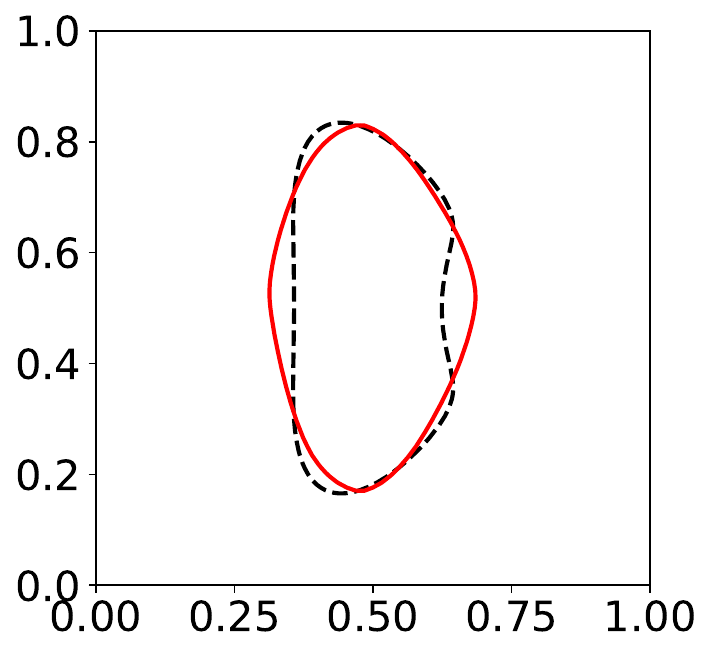}}
\subfloat{\includegraphics[scale=0.35]{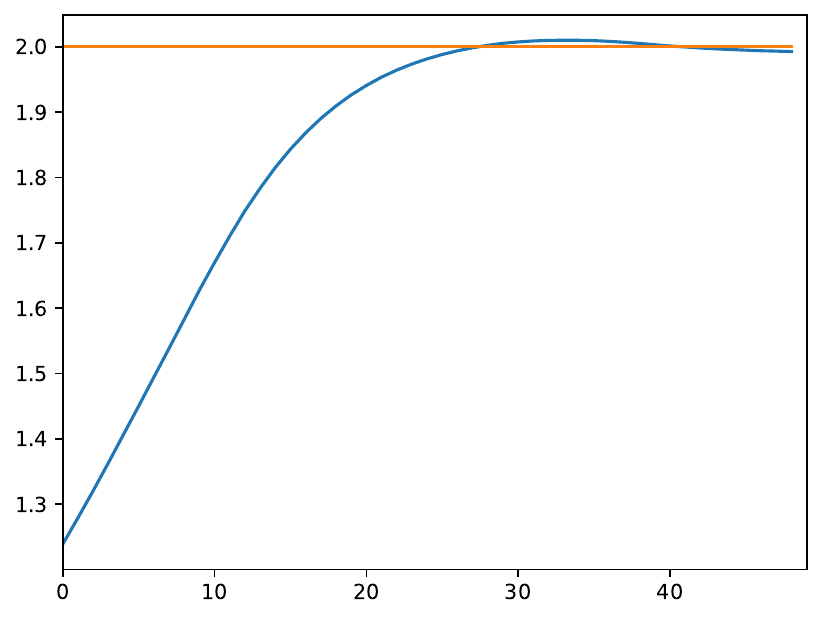}}
\end{center}
\caption{Simultaneous reconstruction of the conductivity $\sigma_1$ and the shape $D$ without noise data $(\eta=0.0)$. First colum: initialization (solid contours) and true inclusion (dashed contour).
 Second  column: reconstruction (solid contours) and true inclusion (dashed contours). Third column:  history of convergence for the  conductivity $\sigma_1$, the orange line 
represents the true value of $\sigma_1$.  Convergence occurs in 43 and 50 iterations(from up to down).}
\label{fig_sigma_shape}
\end{figure}
Figure \ref{fig_sigma_shape} show the simultenous reconstruction of the conductivity $\sigma_1$ and the shape $D$ using $5$ measurements.
In the case of one ball the shape is well  recontruced as well as  the  conductivity  ${\sigma_1}_{\rm approx}=2.002575$.
For the shape described in row 2,  we also get a good reconstruction results.  The approximated conductivity is given by  ${\sigma_1}_{\rm approx}=1.992835$.

 \begin{figure}[H]
\begin{center}
\subfloat{\includegraphics[scale=0.35]{Results_sigma_shape/initial_sigma_circle}}
\subfloat{\includegraphics[scale=0.35]{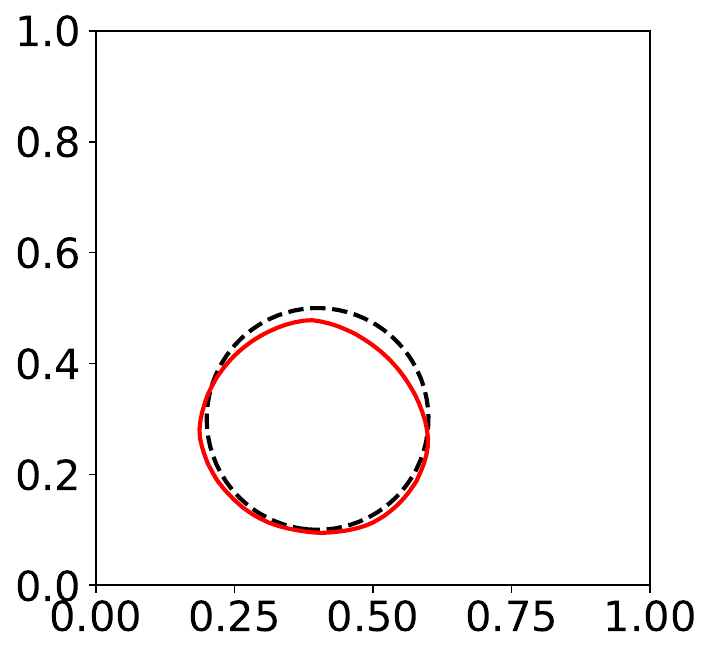}}
\subfloat{\includegraphics[scale=0.35]{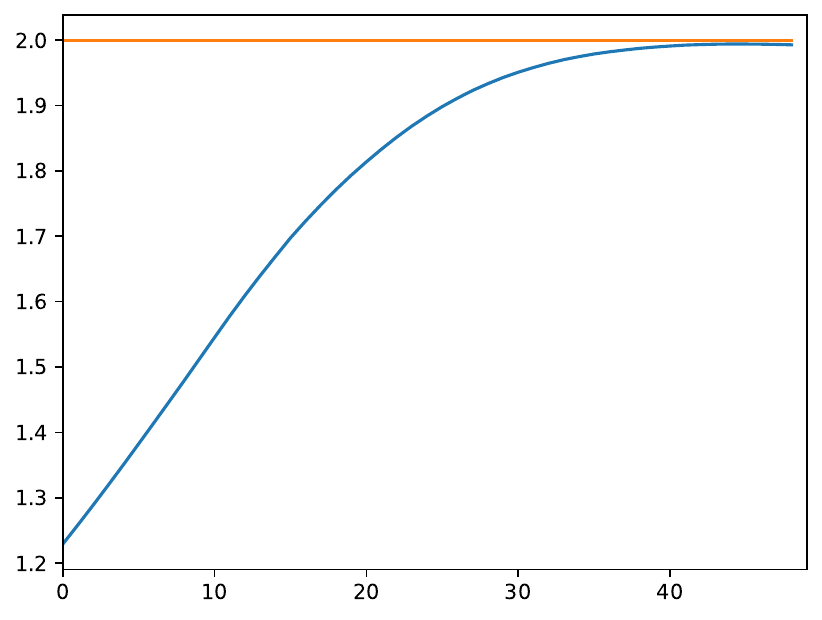}}\\
\subfloat{\includegraphics[scale=0.35]{Results_sigma_shape/initial_sigma_concave}}
\subfloat{\includegraphics[scale=0.35]{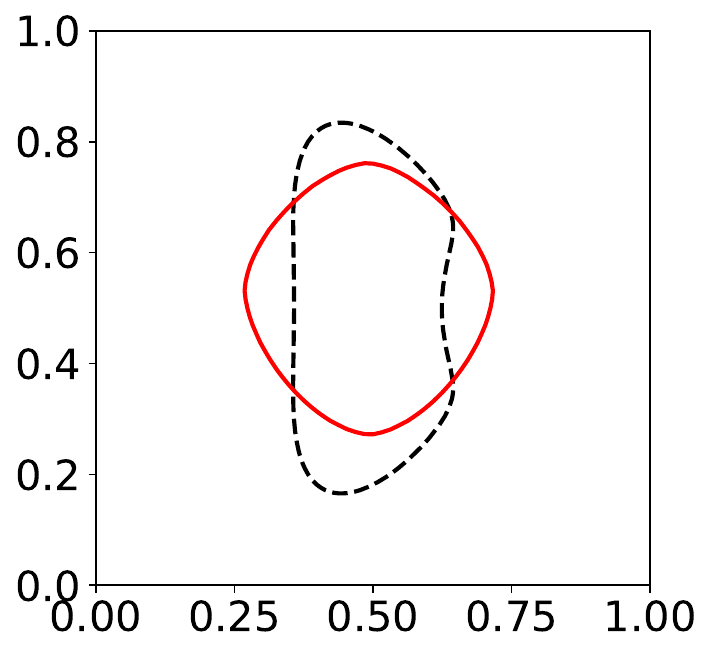}}
\subfloat{\includegraphics[scale=0.35]{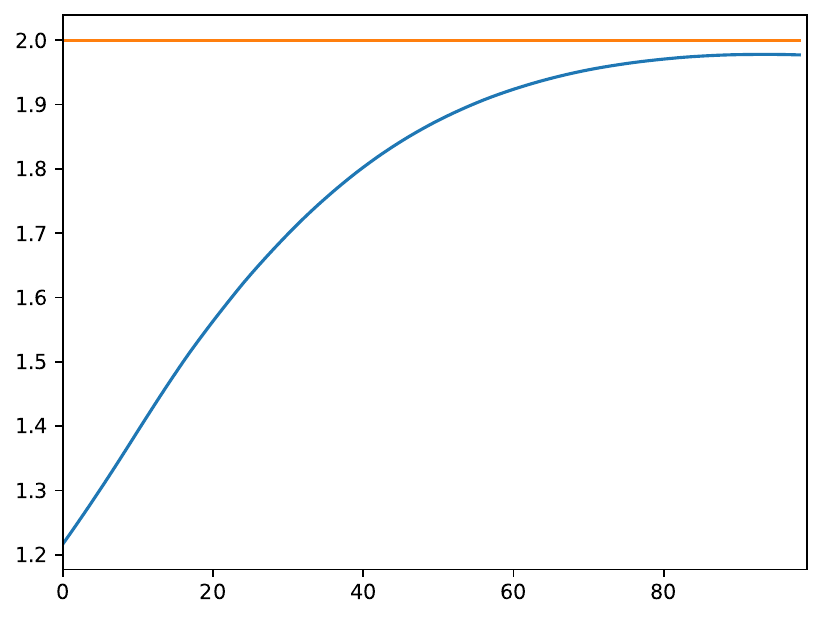}}
\end{center}
\caption{Simultaneous reconstruction of the conductivity $\sigma_1$ and the shape $D$ with noise data $(\eta=0.001)$. First colum: initialization (solid contours) and true inclusion (dashed contour).
 Second  column: reconstruction (solid contours) and true inclusion (dashed contours). Third column:  history of convergence for the  conductivity $\sigma_1$, the orange line 
represents the true value of $\sigma_1$.  Convergence occurs in 50 and 90 iterations(from up to down).}
\label{fig_sigma_shape_noise}
\end{figure}
Figure \ref{fig_sigma_shape_noise} show the simultenous reconstruction of the conductivity $\sigma_1$ and the shape $D$ using $5$ measurements.
In the case of one ball the shape is well  recontruced as well as  the  conductivity  ${\sigma_1}_{\rm approx}= 1.993903$.
For the shape described in  row  2,   the conductivity is well reconstructed compared to the shape.  The approximated conductivity is given by 
 ${\sigma_1}_{\rm approx}=2.0294112$.
\section{Conclusion}

In this paper, we developed a numerical method that combines monotonicity and the level set method to solve a geometric inverse problem.
 We specifically addressed the conductivity problem to apply our numerical scheme.

The effectiveness of the level set method is often influenced by the quality of the initial guess.
 To improve this, we used both the linearized monotonicity method and a regularized monotonicity method to select a suitable initialization. 
Our results show that the regularized monotonicity method provides a more accurate and stable approximation compared to the standard linearized monotonicity method.
 We then used the initial guess from the regularized method to initialize the level set method, and we present the corresponding numerical results.

\bibliographystyle{abbrv}
\bibliography{biblio}

\begin{thebibliography}{10}

\bibitem{beilina2008globally}
L.~Beilina and M.~V. Klibanov.
\newblock A globally convergent numerical method for a coefficient inverse
  problem.
\newblock {\em SIAM Journal on Scientific Computing}, 31(1):478--509, 2008.

\bibitem{beilina2012approximate}
L.~Beilina and M.~V. Klibanov.
\newblock {\em Approximate global convergence and adaptivity for coefficient
  inverse problems}.
\newblock Springer Science \& Business Media, 2012.

\bibitem{belhachmi2018topology}
Z.~Belhachmi, A.~Ben~Abda, B.~Meftahi, and H.~Meftahi.
\newblock Topology optimization method with respect to the insertion of small
  coated inclusion.
\newblock {\em Asymptotic Analysis}, 106(2):99--119, 2018.

\bibitem{belhachmi2023level}
Z.~Belhachmi, R.~Dhif, and H.~Meftahi.
\newblock Level set-based shape optimization approach for the inverse optical
  tomography problem.
\newblock {\em ZAMM-Journal of Applied Mathematics and Mechanics/Zeitschrift
  f{\"u}r Angewandte Mathematik und Mechanik}, 103(3):e202200156, 2023.

\bibitem{belhachmi2013shape}
Z.~Belhachmi and H.~Meftahi.
\newblock Shape sensitivity analysis for an interface problem via minimax
  differentiability.
\newblock {\em Applied Mathematics and Computation}, 219(12):6828--6842, 2013.

\bibitem{calvano2012fast}
F.~Calvano, G.~Rubinacci, and A.~Tamburrino.
\newblock Fast methods for shape reconstruction in electrical resistance
  tomography.
\newblock {\em NDT and $\&$ E International}, 46:32--40, 2012.

\bibitem{chaabane2013topological}
S.~Chaabane, M.~Masmoudi, and H.~Meftahi.
\newblock Topological and shape gradient strategy for solving geometrical
  inverse problems.
\newblock {\em Journal of Mathematical Analysis and applications},
  400(2):724--742, 2013.

\bibitem{DZ2}
M.~C. Delfour and J.-P. Zol{\'e}sio.
\newblock {\em Shapes and geometries}, volume~22 of {\em Advances in Design and
  Control}.
\newblock Society for Industrial and Applied Mathematics (SIAM), Philadelphia,
  PA, second edition, 2011.
\newblock Metrics, analysis, differential calculus, and optimization.

\bibitem{eberle2021shape}
S.~Eberle and B.~Harrach.
\newblock Shape reconstruction in linear elasticity: standard and linearized
  monotonicity method.
\newblock {\em Inverse Problems}, 37(4):045006, 2021.

\bibitem{eberle2023resolution}
S.~Eberle-Blick and B.~Harrach.
\newblock Resolution guarantees for the reconstruction of inclusions in linear
  elasticity based on monotonicity methods.
\newblock {\em Inverse Problems}, 39(7):075006, 2023.

\bibitem{garde2022simplified}
H.~Garde.
\newblock Simplified reconstruction of layered materials in eit.
\newblock {\em Applied Mathematics Letters}, 126:107815, 2022.

\bibitem{garde2022reconstruction}
H.~Garde and N.~Hyv{\"o}nen.
\newblock Reconstruction of singular and degenerate inclusions in
  calder{\'o}n's problem.
\newblock {\em Inverse Problems and Imaging}, 16(5):1219--1227, 2022.

\bibitem{garde2017convergence}
H.~Garde and S.~Staboulis.
\newblock Convergence and regularization for monotonicity-based shape
  reconstruction in electrical impedance tomography.
\newblock {\em Numerische Mathematik}, 135(4):1221--1251, 2017.

\bibitem{gebauer2008localized}
B.~Gebauer.
\newblock Localized potentials in electrical impedance tomography.
\newblock {\em Inverse Probl. Imaging}, 2(2):251--269, 2008.

\bibitem{cvx}
M.~Grant and S.~Boyd.
\newblock {CVX}: Matlab software for disciplined convex programming, version
  2.1.
\newblock {http://cvxr.com/cvx}, mar 2014.

\bibitem{GrBo07}
M.~Grant and B.~Stephen.
\newblock {\em CVX: MATLAB\textregistered software for disciplined convex
  programming}.
\newblock http://www.stanford.edu/~boyd/cvx/, 1.1 edition, September 2007.

\bibitem{harrach2021solving}
B.~Harrach.
\newblock Solving an inverse elliptic coefficient problem by convex non-linear
  semidefinite programming.
\newblock {\em Optim.\ Lett.}, 2021.

\bibitem{harrach2021uniqueness}
B.~Harrach.
\newblock Uniqueness, stability and global convergence for a discrete inverse
  elliptic {R}obin transmission problem.
\newblock {\em Numerische Mathematik}, 147:29--70, 2021.

\bibitem{harrach2023calderon}
B.~Harrach.
\newblock The {C}alder{\'o}n problem with finitely many unknowns is equivalent
  to convex semidefinite optimization.
\newblock {\em SIAM Journal on Mathematical Analysis}, 55(5):5666--5684, 2023.

\bibitem{harrach2016enhancing}
B.~Harrach and M.~N. Minh.
\newblock Enhancing residual-based techniques with shape reconstruction
  features in electrical impedance tomography.
\newblock {\em Inverse Problems}, 32(12):125002, 2016.

\bibitem{harrach2018monotonicity}
B.~Harrach and M.~N. Minh.
\newblock Monotonicity-based regularization for phantom experiment data in
  electrical impedance tomography.
\newblock {\em New trends in parameter identification for mathematical models},
  pages 107--120, 2018.

\bibitem{harrach2013monotonicity}
B.~Harrach and M.~Ullrich.
\newblock Monotonicity-based shape reconstruction in electrical impedance
  tomography.
\newblock {\em SIAM Journal on Mathematical Analysis}, 45(6):3382--3403, 2013.

\bibitem{ikehata1998size}
M.~Ikehata.
\newblock Size estimation of inclusion.
\newblock {\em J. Inverse Ill-Posed Probl.}, 6(2):127--140, 1998.

\bibitem{kang1997inverse}
H.~Kang, J.~K. Seo, and D.~Sheen.
\newblock The inverse conductivity problem with one measurement: stability and
  estimation of size.
\newblock {\em SIAM J. Math. Anal.}, 28(6):1389--1405, 1997.

\bibitem{klibanov2017convexification}
M.~V. Klibanov.
\newblock Convexification of restricted {D}irichlet-to-{N}eumann map.
\newblock {\em Journal of Inverse and Ill-posed Problems}, 25(5):669--685,
  2017.

\bibitem{klibanov2019convexification}
M.~V. Klibanov, J.~Li, and W.~Zhang.
\newblock Convexification of electrical impedance tomography with restricted
  {D}irichlet-to-{N}eumann map data.
\newblock {\em Inverse Problems}, 2019.

\bibitem{laurain2016shape}
A.~Laurain and H.~Meftahi.
\newblock Shape and parameter reconstruction for the robin transmission inverse
  problem.
\newblock {\em Journal of Inverse and Ill-posed Problems}, 24(6):643--662,
  2016.

\bibitem{fenics:book}
A.~Logg, K.-A. Mardal, and G.~N. Wells, editors.
\newblock {\em Automated Solution of Differential Equations by the Finite
  Element Method}, volume~84 of {\em Lecture Notes in Computational Science and
  Engineering}.
\newblock Springer, 2012.

\bibitem{meftahi2009etudes}
H.~Meftahi.
\newblock {\em {\'E}tudes th{\'e}oriques et num{\'e}riques de quelques
  probl{\`e}mes inverses}.
\newblock PhD thesis, Lille 1, 2009.

\bibitem{MR965860}
S.~Osher and J.~A. Sethian.
\newblock Fronts propagating with curvature-dependent speed: algorithms based
  on {H}amilton-{J}acobi formulations.
\newblock {\em J. Comput. Phys.}, 79(1):12--49, 1988.

\bibitem{MR1111446}
S.~Osher and C.-W. Shu.
\newblock High-order essentially nonoscillatory schemes for {H}amilton-{J}acobi
  equations.
\newblock {\em SIAM J. Numer. Anal.}, 28(4):907--922, 1991.

\bibitem{SZ}
J.~Soko{\l}owski and J.-P. Zol{\'e}sio.
\newblock {\em Introduction to shape optimization}, volume~16 of {\em Springer
  Series in Computational Mathematics}.
\newblock Springer-Verlag, Berlin, 1992.
\newblock Shape sensitivity analysis.

\bibitem{Tamburrino}
A.~Tamburrino.
\newblock Monotonicity based imaging methods for elliptic and parabolic inverse
  problems.
\newblock {\em Journal of Numerical Mathematics}, 14(6):633--642, 2006.

\bibitem{tamburrino2002new}
A.~Tamburrino and G.~Rubinacci.
\newblock A new non-iterative inversion method for electrical resistance
  tomography.
\newblock {\em Inverse Problems}, 18(6):1809, 2002.

\end{thebibliography}

\end{document}